\documentclass[a4paper, 11pt]{article}
\oddsidemargin 0pt \evensidemargin 0pt \marginparwidth 40pt
\marginparsep 10pt \topmargin 0pt \headsep 10pt \textheight 9in
\textwidth 6.8in \setlength{\parskip}{1ex plus 0.5ex minus 0.2ex}
\pagestyle{plain}

\title{Cayley Graph on Symmetric Group Generated by Elements Fixing $k$ Points}
\author{Cheng Yeaw Ku \thanks{ Department of Mathematics, National University of
Singapore, Singapore 117543. E-mail: matkcy@nus.edu.sg} \and Terry Lau \thanks{
Institute of Mathematical Sciences, University of Malaya, 50603
Kuala Lumpur, Malaysia. \newline E-mail:
terrylsc@hotmail.com} \and Kok Bin Wong \thanks{
Institute of Mathematical Sciences, University of Malaya, 50603
Kuala Lumpur, Malaysia. \newline E-mail:
kbwong@um.edu.my}}
\date{\today}

\usepackage{amsmath, amssymb, amsthm, indentfirst, bm, verbatim, yfonts, bbm, dsfont}

\theoremstyle{plain}
\newtheorem{thm}{Theorem}[section]
\newtheorem{cor}[thm]{Corollary}

\newtheorem{lem}[thm]{Lemma}
\newtheorem{eg}[thm]{Example}

\theoremstyle{definition}

\theoremstyle{remark}

\newcommand{\w}{\widehat}
\newcommand{\sgn}{\textnormal{sign}}

\begin{document}
\maketitle
\markboth{Cayley Graph on Symmetric Group Generated by Elements Fixing $k$ Points}
{Cayley Graph on Symmetric Group Generated by Elements Fixing $k$ Points}
\renewcommand{\sectionmark}[1]{}

\begin{abstract} \noindent
Let $\mathcal{S}_{n}$ be the symmetric group on $[n]=\{1, \ldots, n\}$. The $k$-point fixing graph $\mathcal{F}(n,k)$ is defined to be the graph with vertex set $\mathcal{S}_{n}$ and two vertices $g$, $h$ of $\mathcal{F}(n,k)$ are joined if and only if  $gh^{-1}$ fixes exactly $k$ points. In this paper, we derive a recurrence formula for the eigenvalues of $\mathcal{F}(n,k)$. Then we apply our result to determine the sign of the eigenvalues of $\mathcal{F}(n,1)$.
\end{abstract}

\vspace{5mm}
\noindent
{\sc keywords:} Arrangement graph, Cayley graphs, symmetric group, Alternating Sign Property. \\[.1cm]
\noindent
{\sc \small  2010 MSC: 05C50,  05A05}

\section{Introduction}

Let $G$ be a finite group and $S$ be an inverse closed subset of $G$, i.e., $1 \notin S$ and $s \in S \Rightarrow s^{-1} \in G$. The \emph{Cayley graph} $\Gamma(G,S)$ is the graph which has the elements of $G$ as its vertices and two vertices $u, v \in G$ are joined by an edge if and only if $v=su$ for some $s\in S$.

 A Cayley graph $\Gamma(G,S)$ is said to be {\em normal} if $S$ is closed under conjugation. It is well known that the eigenvalues of a normal Cayley graph $\Gamma(G,S)$ can be expressed in terms of the irreducible characters of $G$.

\begin{thm}[\cite{babai, DS, Lub, Ram}]\label{frobenius-schur}
The eigenvalues of a normal Cayley graph $\Gamma(G,S)$ are given by
\begin{eqnarray}
\eta_{\chi} & = & \frac{1}{\chi(1)} \sum_{s \in S} \chi(s),\notag
\end{eqnarray}
where $\chi$ ranges over all the irreducible characters of $G$. Moreover, the multiplicity of $\eta_{\chi}$ is $\chi(1)^{2}$.
\end{thm}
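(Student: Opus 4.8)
The plan is to realize the adjacency operator of $\Gamma(G,S)$ as an element of the group algebra $\mathbb{C}[G]$ and then exploit the decomposition of the regular representation. First I would index the standard basis of $\mathbb{C}[G]$ by group elements, writing $e_u$ for the basis vector at $u \in G$. Under the stated convention that $u$ and $v$ are adjacent precisely when $v = su$ for some $s \in S$, the adjacency operator $A$ sends $e_u$ to $\sum_{v \sim u} e_v = \sum_{s \in S} e_{su}$, which is exactly left multiplication by the element $T = \sum_{s \in S} s \in \mathbb{C}[G]$. Hence the spectrum of $\Gamma(G,S)$ coincides with the spectrum of $T$ acting on the regular representation.

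The crucial point is that normality of $S$ forces $T$ into the center $Z(\mathbb{C}[G])$: for every $g \in G$, conjugation merely permutes $S$, so $g T g^{-1} = \sum_{s \in S} g s g^{-1} = T$. I would then invoke the decomposition of the regular representation into irreducibles, in which each irreducible module $V_\chi$ (with character $\chi$) occurs with multiplicity $\chi(1)$, equivalently the Wedderburn isomorphism $\mathbb{C}[G] \cong \bigoplus_\chi \mathrm{End}(V_\chi)$. By Schur's lemma, a central element acts on each $V_\chi$ as a scalar $\lambda_\chi$, so the eigenvalues of $T$ are precisely these scalars.

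To identify $\lambda_\chi$ I would compute the trace of $T$ on $V_\chi$ in two ways. On one hand $\mathrm{tr}_{V_\chi}(T) = \sum_{s \in S} \chi(s)$; on the other hand, since $T$ acts as $\lambda_\chi$ on a space of dimension $\chi(1)$, this trace equals $\lambda_\chi \chi(1)$. Equating the two gives $\lambda_\chi = \frac{1}{\chi(1)} \sum_{s \in S} \chi(s) = \eta_\chi$, as claimed. For the multiplicity, I would count how often $\eta_\chi$ arises in $\mathbb{C}[G]$: within a single copy of $V_\chi$ the scalar $\eta_\chi$ occurs $\dim V_\chi = \chi(1)$ times, and $V_\chi$ itself appears $\chi(1)$ times in the regular representation, giving total multiplicity $\chi(1)^2$.

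The main obstacle is not any single computation but making the multiplicity bookkeeping precise, namely distinguishing the multiplicity of $V_\chi$ as an isotypic component from its dimension, and handling the possibility that $\eta_\chi = \eta_{\chi'}$ for distinct characters, in which case the stated multiplicity $\chi(1)^2$ is to be read per character and the contributions summed. Since each $\eta_\chi$ is determined entirely by the central character of $V_\chi$, this is a formal matter once the Wedderburn decomposition and Schur's lemma are in place.
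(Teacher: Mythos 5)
Your argument is correct and is essentially the standard proof of this classical fact: realize the adjacency operator as left multiplication by $T=\sum_{s\in S}s$ in $\mathbb{C}[G]$, observe that normality of $S$ places $T$ in the center so that Schur's lemma makes it act as a scalar on each irreducible, identify the scalar by a trace computation, and count multiplicities via the decomposition of the regular representation. The paper itself gives no proof, citing the result to Babai, Diaconis--Shahshahani, Lubotzky, and Ram Murty, and your write-up (including the caveat about summing multiplicities when $\eta_\chi=\eta_{\chi'}$ for distinct characters) matches the argument in those sources.
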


Let $\mathcal{S}_{n}$ be the symmetric group on $[n]=\{1, \ldots, n\}$ and $S\subseteq \mathcal S_n$ be closed under conjugation. Since central
characters are algebraic integers (\cite[Theorem 3.7 on p. 36]{Isaacs}) and that the characters of the symmetric group are integers (\cite[2.12 on p. 31]{Isaacs} or \cite[Corollary 2 on p. 103]{Serre}), by Theorem \ref{frobenius-schur}, the eigenvalues of $\Gamma(\mathcal S_n,S)$ are integers.

\begin{cor}\label{cor_cayley_integer}
The eigenvalues of a normal Cayley graph $\Gamma(\mathcal S_n,S)$ are integers.
\end{cor}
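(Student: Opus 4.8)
The plan is to combine the eigenvalue formula of Theorem~\ref{frobenius-schur} with the two standard number-theoretic facts already flagged in the preceding paragraph. The entire argument rests on one elementary principle: a rational number that happens to be an algebraic integer is necessarily an ordinary (rational) integer. Thus the real task is to exhibit each eigenvalue $\eta_\chi$ as being simultaneously a rational number and an algebraic integer.

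First I would fix an irreducible character $\chi$ of $\mathcal{S}_n$ and look at the associated eigenvalue $\eta_\chi = \frac{1}{\chi(1)} \sum_{s\in S} \chi(s)$ supplied by Theorem~\ref{frobenius-schur}. Since $S$ is closed under conjugation, it is a disjoint union of conjugacy classes $C_1,\dots,C_r$ of $\mathcal{S}_n$. Choosing a representative $g_i\in C_i$ and using that $\chi$ is a class function, I would rewrite
\[
\eta_\chi \;=\; \frac{1}{\chi(1)} \sum_{i=1}^{r} |C_i|\,\chi(g_i) \;=\; \sum_{i=1}^{r} \frac{|C_i|\,\chi(g_i)}{\chi(1)} \;=\; \sum_{i=1}^{r} \omega_\chi(C_i),
\]
where $\omega_\chi(C_i)$ is the value of the central character of $\chi$ on the class sum of $C_i$. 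This is the one step with genuine content, and it is exactly where the hypothesis that $S$ is a union of conjugacy classes is used.

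Next I would invoke the two cited facts. Because central characters are algebraic integers (\cite[Theorem~3.7]{Isaacs}), each summand $\omega_\chi(C_i)$ is an algebraic integer, whence so is the sum $\eta_\chi$. Independently, because every character value of the symmetric group is a rational integer (\cite[2.12]{Isaacs} or \cite[Corollary~2]{Serre}), the numerator $\sum_{s\in S}\chi(s)$ is an ordinary integer and $\chi(1)$ is a positive integer, so $\eta_\chi$ is a rational number.

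Finally I would conclude: $\eta_\chi$ is a rational number which is also an algebraic integer, and since $\mathbb{Z}$ is integrally closed in $\mathbb{Q}$, it follows that $\eta_\chi\in\mathbb{Z}$. As $\chi$ was an arbitrary irreducible character, every eigenvalue of $\Gamma(\mathcal{S}_n,S)$ is an integer. I expect no real obstacle here: apart from the class-sum rewriting above, each ingredient is either a direct citation or the standard integrally-closed argument.
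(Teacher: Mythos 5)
Your proposal is correct and follows essentially the same route as the paper, which proves the corollary in one sentence by citing the same two facts (central characters are algebraic integers; symmetric-group characters are rational integers) and concluding via Theorem~\ref{frobenius-schur}. Your write-up merely makes explicit the class-sum decomposition and the integrally-closed step that the paper leaves implicit.
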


For $k\le n$, a {\em $k$-permutation} of $[n]$ is an injective function from $[k]$ to $[n]$.
So any $k$-permutation $\pi$ can be represented by a vector $(i_1,\ldots,i_k)$ where $\pi(j)=i_j$ for $j=1,\ldots,k$. Let $1\leq r\leq k\leq n$.
The {\em $(n,k,r)$-arrangement graph}   $A(n,k,r)$ has all the $k$-permutations of $[n]$ as vertices and two $k$-permutations are adjacent if they differ in exactly $r$ positions. Formally, the vertex set $V(n,k)$ and edge set $E(n,k,r)$ of $A(n,k,r)$ are
\begin{align}
V(n,k) & =\big\{(p_1,p_2,\dots ,p_k) \mid p_i\in [n]\ \textnormal{and}\ p_i\neq p_j\ \textnormal{for}\ i\neq j\big\},\notag\\
E(n,k,r) &=\big\{ \{(p_1,p_2,\dots ,p_k),(q_1,q_2,\dots ,q_k)\} \mid p_i\neq q_i \ \textnormal {for $i\in R$ and}\notag\\
&\hskip 1cm p_j= q_j \ \textnormal{for all $j\in [k]\setminus R$ for some $R\subseteq [k]$ with $\vert R\vert=r$}\big\}.\notag
\end{align}
Note that $\vert V(n,k)\vert=n!/(n-k)!$ and $A(n,k,r)$ is a regular graph \cite[Theorem 4.2]{Chen2}. In particular, $A(n,k,1)$ is a $k(n-k)$-regular graph. We note here that $A(n,k,1)$ is called the partial permutation graph in \cite{Kra}.

The family of the arrangement graphs $A(n,k,1)$ was first introduced in \cite{dt} as an interconnection network model for parallel computation.
In the interconnection network model, each processor has its own memory unit and
communicates with the other processors through a topological network, i.e. a graph. Many properties of the arrangement graphs $A(n,k,1)$ have been studied in \cite{cgq,cll,cly,cqs,cc,dt,ttt,zx}.

The eigenvalues of the arrangement graphs $A(n,k,1)$ were first studied in \cite{Chen} by using a method developed by Godsil and McKay  \cite{gm}. A relation between the eigenvalues of $A(n,k,r)$ and certain Cayley graphs was given in \cite{Chen2}.

The {\em derangement graph} $\Gamma_{n}$ is the Cayley graph $\Gamma(\mathcal{S}_{n}, D_n)$ where $D_n$ is the set of derangements in $\mathcal{S}_{n}$. That is, two vertices $g$, $h$ of $\Gamma_{n}$ are joined if and only if $g(i) \not = h(i)$ for all $i \in [n]$, or equivalently $gh^{-1}$ fixes no point.
Since $D_n$ is closed under conjugation, by Corollary \ref{cor_cayley_integer}, the eigenvalues of the derangement graph are integers.  The lower and upper bounds of the absolute values of these eigenvalues have been studied in \cite{ku-wales, ku-wong, renteln}. Note that the derangement graph is a kind of arrangement graph, i.e., $\Gamma_{n}=A(n,n,n)$.

Let $0\leq k < n$ and $\mathcal{S} (n,k)$ be the set of all $\sigma \in \mathcal{S}_{n}$ such that $\sigma$ fixes exactly $k$ elements. Note that $\mathcal{S} (n,k)$ is an inverse closed subset of $\mathcal{S}_{n}$. The $k$-\emph{point fixing graph} is defined to be
  \[ \mathcal{F}(n,k) = \Gamma (\mathcal S_n, \mathcal{S} (n,k)). \]
  That is, two vertices $g$, $h$ of $\mathcal{F}(n,k)$ are joined if and only if  $gh^{-1}$ fixes exactly $k$ points.
Note that the $k$-point fixing graph is also a kind of arrangement graph, i.e., $\mathcal{F}(n,k)=A(n,n,n-k)$. Furthermore, the 0-point fixing graph is the derangement graph, i.e., $\mathcal{F}(n,0)=\Gamma_{n}=A(n,n,n)$.

Clearly, $\mathcal{F}(n,k)$ is vertex-transitive, so it is $|\mathcal{S}(n,k)|$-regular and  the largest eigenvalue of $\mathcal{F}(n,k)$ is $|\mathcal{S}(n,k)|$. Furthermore, $\mathcal{S} (n,k)$ is closed under conjugation. Therefore, by Corollary \ref{cor_cayley_integer}, the eigenvalues of the $k$-point fixing graph are integers.  However, the complete set of spectrum of $\mathcal{F}(n,k)$ is not known. The purpose of this paper is to study the eigenvalues of $\mathcal{F}(n,k)$.

Recall that a partition $\lambda$ of $n$, denoted by $\lambda \vdash n$, is a weakly decreasing sequence $\lambda_{1} \ge \ldots \ge \lambda_{r}$ with $\lambda_{r} \ge 1$ such that $\lambda_{1} + \cdots + \lambda_{r} = n$. We write $\lambda = (\lambda_{1}, \ldots, \lambda_{r})$. The {\em size} of $\lambda$, denoted by $|\lambda|$, is $n$ and each $\lambda_{i}$ is called the {\em $i$-th part} of the partition. We also use the notation $(\mu_{1}^{a_{1}}, \ldots, \mu_{s}^{a_{s}}) \vdash n$ to denote the partition where $\mu_{i}$ are the distinct nonzero parts that occur with multiplicity $a_{i}$. For example,
\[ (5,5,4,4,2,2,2,1) \longleftrightarrow (5^{2}, 4^{2}, 2^{3}, 1). \]

It is well known that both the conjugacy classes of $\mathcal{S}_{n}$ and the irreducible characters of $\mathcal{S}_{n}$ are indexed by partitions $\lambda$ of $[n]$. Since $\mathcal{S} (n,k)$ is closed under conjugation, the eigenvalue $\eta_{\chi_{\lambda}}(k)$ of the $k$-point fixing graph can be denoted by $\eta_{\lambda}(k)$. Throughout the paper, we shall use this notation.

The paper is organized as follows. In Section 2, we provide some known results regarding the eigenvalues of  $\mathcal{F} (n,0)$. In Section 3, we prove a recurrence formula for the eigenvalues of $\mathcal{F}(n,k)$ (Theorem \ref{eigen-n1}). In Section 4, we prove 
some inequalities for the eigenvalues of $\mathcal{F}(n,0)$ which will be used to prove the following Alternating Sign Property (ASP) for $\mathcal{F}(n,1)$:

\begin{thm}\label{ASP_1}\textnormal{(ASP for $\mathcal{F}(n,1)$)} Let $n\geq 2$ and $\lambda = (\lambda_{1}, \ldots, \lambda_{r}) \vdash n$.
\begin{itemize}
\item[\textnormal{(a)}] $\eta_\lambda (1)=0$ if and only if $\lambda= (n-1,1)$ or $\lambda=(2,1^{n-2})$.
\item[\textnormal{(b)}] If $r=1$ and $\lambda\neq (2)$, then $\eta_\lambda (1)>0$.
\item[\textnormal{(c)}] If $r\geq 2$ and $\lambda\neq (n-1,1)$ or $(2,1^{n-2})$, then
\begin{eqnarray}
\sgn(\eta_{\lambda}(1)) & = & (-1)^{|\lambda|-\lambda_{1}-1} \nonumber \\
& = & (-1)^{(\# \textnormal{cells under the first row of $\lambda$})-1}\notag
\end{eqnarray}
where $\sgn(\eta_\lambda(1))$ is $1$ if $\eta_{\lambda}(1)$ is positive or $-1$ if $\eta_{\lambda}(1)$ is negative.
\end{itemize}
\end{thm}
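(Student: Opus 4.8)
The plan is to reduce everything to the already-understood eigenvalues of the derangement graph $\mathcal{F}(n-1,0)=\Gamma_{n-1}$ via the recurrence of Theorem~\ref{eigen-n1}. Specialised to $k=1$, that recurrence expresses $\eta_\lambda(1)$ as a positive multiple of a signed sum of derangement-graph eigenvalues: writing $f^\mu=\chi_\mu(1)$,
\[
\eta_\lambda(1)=\frac{n}{f^\lambda}\sum_{\mu}f^\mu\,\eta_\mu(0),
\]
where $\mu$ ranges over the partitions of $n-1$ obtained from $\lambda$ by deleting one removable (corner) box (the Littlewood--Richardson specialisation coming from restricting $\chi_\lambda$ to $\mathcal S_{n-1}\times\mathcal S_1$ and summing over the $n$ choices of the fixed point). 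I would first record the sign rule for the derangement graph from \cite{renteln,ku-wong}, namely $\sgn(\eta_\mu(0))=(-1)^{|\mu|-\mu_1}$ with $\eta_\mu(0)\neq 0$, together with the quantitative inequalities of Section~4.

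Next I would split the corner boxes into two types. Deleting the box at the end of a row $i\ge 2$ produces $\mu$ with $\mu_1=\lambda_1$, hence $\sgn(\eta_\mu(0))=(-1)^{(n-1)-\lambda_1}=(-1)^{n-\lambda_1-1}$, which is exactly the sign asserted in (c). Deleting the box at the end of the first row (possible only when $\lambda_1>\lambda_2$) produces $\mu$ with $\mu_1=\lambda_1-1$, hence the opposite sign $(-1)^{n-\lambda_1}$. Thus if $\lambda_1=\lambda_2$ every summand carries the sign $(-1)^{n-\lambda_1-1}$ and, since each $f^\mu>0$ and each $\eta_\mu(0)\neq 0$, the conclusion of (c) is immediate. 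The case $\lambda_1>\lambda_2$ is the substantive one: after factoring out $(-1)^{n-\lambda_1-1}$, one must show that the below-first-row contributions outweigh the single first-row contribution, i.e.
\[
\sum_{i\ge 2} f^{\mu^{(i)}}\bigl|\eta_{\mu^{(i)}}(0)\bigr|\;>\;f^{\mu^{(1)}}\bigl|\eta_{\mu^{(1)}}(0)\bigr|,
\]
where $\mu^{(1)}$ is the first-row deletion and the $\mu^{(i)}$ are the deletions in rows $\ge 2$ (at least one exists, from the last row, since $r\ge 2$). This strict inequality simultaneously yields the correct sign and the nonvanishing needed for the ``only if'' direction of (a).

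The boundary statements (a) and (b) I would settle by direct evaluation of the same formula. For $\lambda=(n)$ (part (b)) the only corner is in the first row, giving $\eta_{(n)}(1)=n\,\eta_{(n-1)}(0)=n\,|D_{n-1}|$, which is positive for $n\ge 3$ and vanishes exactly when $n=2$ (the case $\lambda=(2)$). For $\lambda=(n-1,1)$ the two corners give $\mu=(n-2,1)$ and $\mu=(n-1)$; using $\eta_{(n-1)}(0)=|D_{n-1}|$ and $f^{(n-2,1)}\eta_{(n-2,1)}(0)=-|D_{n-1}|$ (the least eigenvalue of $\Gamma_{n-1}$) the two terms cancel, so $\eta_{(n-1,1)}(1)=0$; the conjugate partition $(2,1^{n-2})$ is handled by the analogous computation (or by tensoring with the sign character). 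Combined with the strict inequality above, which rules out any other zero, this gives the ``if and only if'' in (a).

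The main obstacle is the domination inequality in the case $\lambda_1>\lambda_2$. The difficulty is that $\mu^{(1)}$ and the $\mu^{(i)}$ all partition $n-1$, so the comparison is genuinely quantitative rather than dimensional; it cannot be read off from sizes alone and must come from sharp lower bounds on $f^\mu|\eta_\mu(0)|$ for the row-$(\ge2)$ deletions against a matching upper bound for the row-$1$ deletion. I expect this is precisely the role of the Section~4 inequalities for $\mathcal F(n,0)$: one would bound the first-row term from above and a single well-chosen below-first-row term (for instance the last-row deletion) from below so that this one term already dominates, reducing the remaining cases to a short induction on the shape of $\lambda$. Controlling the configurations in which $\lambda$ is close to the two exceptional partitions, where the gap between the competing contributions is smallest, is where the estimates will have to be most delicate.
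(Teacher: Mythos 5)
Your proposal follows the paper's proof essentially step for step: the $k=1$ case of Theorem~\ref{eigen-n1} together with Theorem~\ref{ASP_0} reduces everything to showing that a single below-first-row corner term dominates the first-row corner term, and the domination inequality you defer to the Section~4 estimates is exactly Theorem~\ref{thm_ineq} (the paper's dominating corner is the one at the bottom of the $\lambda_2$-block rather than the last row, with equality occurring precisely at $\lambda=(n-1,1)$ and $\lambda=(2,1^{n-2})$, which is how the paper gets the ``only if'' of (a)). Your boundary computations for (a) and (b) and the immediate $\lambda_1=\lambda_2$ case likewise match the paper's argument.
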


In Section 6,  we provide a list of eigenvalues of  $\mathcal{F}(n,1)$ for small $n$.

\section{Known results for eigenvalues of $\mathcal{F}(n,0)$}

To describe the Renteln's recurrence formula for $\mathcal{F}(n,0)$, we require some terminology. To the Ferrers diagram of a partition $\lambda$, we assign
$xy$-coordinates to each of its boxes by defining the
upper-left-most box to be $(1,1)$, with the $x$ axis increasing to
the right and the $y$ axis increasing downwards. Then the {\em hook}
of $\lambda$ is the union of the boxes $(x',1)$ and $(1, y')$ of the
Ferrers diagram of $\lambda$, where $x' \ge 1$, $y' \ge 1$. Let
$\w{h}_{\lambda}$ denote the hook of $\lambda$ and let $h_{\lambda}$
denote the size of $\w{h}_{\lambda}$. Similarly, let
$\w{c}_{\lambda}$ and $c_{\lambda}$ denote the first column of
$\lambda$ and the size of $\w{c}_{\lambda}$ respectively. Note that
$c_{\lambda}$ is equal to the number of rows of $\lambda$. When
$\lambda$ is clear from the context, we will replace $\w{h}_{\lambda}$,
$h_{\lambda}$, $\w{c}_{\lambda}$ and $c_{\lambda}$ by $\w{h}$, $h$,
$\w{c}$ and $c$ respectively. Let $\lambda-\w{h} \vdash n-h$ denote
the partition obtained from $\lambda$ by removing its hook. Also,
let $\lambda-\w{c}$ denote the partition obtained from $\lambda$ by
removing the first column of its Ferrers diagram, i.e.
$(\lambda_{1}, \ldots, \lambda_{r})-\w{c} = (\lambda_{1}-1, \ldots,
\lambda_{r}-1) \vdash n-r$.

\begin{thm}\label{rentelnformula}\textnormal{(\cite[Theorem 6.5]{renteln}  Renteln's Recurrence Formula)}
  For any partition $\lambda=(\lambda_1,\ldots,\lambda_r) \vdash n$, the eigenvalues of the derangement graph $\mathcal{F}(n,0)$ satisfy the following recurrence:
  \[ \eta_\lambda (0) = (-1)^h \eta_{\lambda - \w{h}} (0) + (-1)^{h+\lambda_1} h \eta_{\lambda-\w{c}} (0) \]
  with initial condition $\eta_\emptyset(0) = 1$.
\end{thm}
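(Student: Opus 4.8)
The plan is to evaluate $\eta_\lambda(0)$ through the Frobenius--Schur formula of Theorem~\ref{frobenius-schur} and then recast the resulting character sum in the language of symmetric functions, where removing the hook and removing the first column acquire a transparent meaning. First I would write $\eta_\lambda(0)=\frac{1}{f^\lambda}\sum_{\sigma\in D_n}\chi^\lambda(\sigma)$, with $f^\lambda=\chi^\lambda(1)$ and $D_n=\mathcal S(n,0)$ the set of derangements. Grouping permutations by cycle type gives $\eta_\lambda(0)=\frac{n!}{f^\lambda}\sum_{\mu}\chi^\lambda(\mu)/z_\mu$, where $\mu$ runs over the partitions of $n$ with no part equal to $1$ (exactly the cycle types of derangements) and $z_\mu$ is a centralizer order. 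Equivalently, if $\mathbf{1}_{D_n}$ is the class function indicating derangements, then under the Frobenius characteristic map $\mathrm{ch}$ one has $\sum_{\sigma\in D_n}\chi^\lambda(\sigma)=n!\,\langle \mathrm{ch}(\mathbf{1}_{D_n}),s_\lambda\rangle$, where $s_\lambda$ is the Schur function and $\langle\cdot,\cdot\rangle$ the Hall inner product.

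The next step exploits the product structure of the characteristic. Because a permutation is a derangement exactly when its cycle type omits the part $1$, one obtains the generating identity $\sum_{n\ge 0}\mathrm{ch}(\mathbf{1}_{D_n})=\prod_{k\ge 2}\exp(p_k/k)=\bigl(\sum_{m\ge 0}h_m\bigr)e^{-p_1}$, with $p_k$ the power sums and $h_m$ the complete homogeneous symmetric functions. Moving the factor $e^{-p_1}$ across the inner product (its adjoint is the skewing operator $p_1^\perp$, which removes a box) and using the Pieri rule $\langle h_m,s_\mu\rangle=[\mu=(m)]$ collapses the extraction to single rows, yielding
\[ \eta_\lambda(0)=\frac{n!}{f^\lambda}\sum_{m=0}^{\lambda_1}\frac{(-1)^{\,n-m}}{(n-m)!}\,f^{\lambda/(m)}, \]
where $f^{\lambda/(m)}$ counts standard Young tableaux of the skew shape $\lambda$ minus a first-row segment of length $m$. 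This already makes the classical case visible: for $\lambda=(n)$ each $f^{(n)/(m)}=1$, so $\eta_{(n)}(0)=D_n$ and the target recurrence reduces to the subfactorial identity $D_n=nD_{n-1}+(-1)^n$.

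The crux is to extract the two-term recurrence from this formula, and here I would proceed by induction on $|\lambda|$. The guiding observation for the first term is that $\widehat h$ is precisely the principal (first Frobenius) hook of $\lambda$, of size $h=\lambda_1+c_\lambda-1$, and that deleting it simply drops the leading arm--leg pair of the Frobenius coordinates $(a_1,\dots,a_d\mid b_1,\dots,b_d)\mapsto(a_2,\dots,a_d\mid b_2,\dots,b_d)$; this is a Giambelli-type minor operation and should produce $(-1)^h\eta_{\lambda-\widehat h}(0)$ once the sign is computed. To make the two terms explicit I would insert the Aitken determinant $f^{\lambda/(m)}=(n-m)!\det\bigl(1/(\lambda_i-\mu_j-i+j)!\bigr)$ (with $\mu=(m)$), so that only the first determinant column depends on $m$; summing the alternating series $\sum_m(-1)^{-m}/(\lambda_i-m-i+1)!$ in that column converts its entries into truncated exponentials, i.e.\ subfactorials, and applying $D_t=tD_{t-1}+(-1)^t$ column-wise together with multilinear expansion should split $\eta_\lambda(0)$ into the hook term and the first-column term $(-1)^{h+\lambda_1}h\,\eta_{\lambda-\widehat c}(0)$. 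The main obstacle, deserving genuine care rather than routine computation, is pinning down the exact signs $(-1)^h$ and $(-1)^{h+\lambda_1}$ and the integer factor $h$ while simultaneously reconciling the three normalizations $n!/f^\lambda$, $(n-h)!/f^{\lambda-\widehat h}$ and $(n-c)!/f^{\lambda-\widehat c}$; by the hook-length formula these ratios are products of hook lengths, so the cancellations are controlled by how the hook lengths of $\lambda$ change under deletion of the principal hook or the first column, which is exactly where the quantity $h$ enters. I would secure the argument by first checking the base cases $\eta_\emptyset(0)=1$ and the single-hook partitions directly, and then verifying the height parities that govern the two signs.
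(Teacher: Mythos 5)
First, a point of reference: the paper does not prove Theorem~\ref{rentelnformula} at all --- it is imported verbatim from \cite[Theorem 6.5]{renteln} --- so there is no in-paper argument to compare against. Your plan is in fact the route of the cited source, and most of it is sound: the Frobenius--Schur formula, the grouping by cycle type, the generating identity $\sum_n \mathrm{ch}(\mathbf{1}_{D_n})=\bigl(\sum_m h_m\bigr)e^{-p_1}$, the adjointness of $p_1$ and box-removal, and the resulting formula $\eta_\lambda(0)=\frac{n!}{f^\lambda}\sum_{m}\frac{(-1)^{n-m}}{(n-m)!}f^{\lambda/(m)}$ are all correct, as is the passage to the Aitken determinant and the conversion of its first column into entries $(-1)^{n-a_i}D_{a_i}/a_i!$ with $a_i=\lambda_i-i+1$.

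The gap is at the step you yourself call the crux, and it is structural rather than the sign bookkeeping you flag. Splitting the first column by multilinearity via $D_t=tD_{t-1}+(-1)^t$ does \emph{not} produce the two terms of the recurrence: the $(-1)^t$ piece turns the first column into $(-1)^n/(\lambda_i-i+1)!$, which reconstitutes the full determinant $\det\bigl(1/(\lambda_i-i+j)!\bigr)=f^\lambda/n!$ and therefore contributes exactly $(-1)^n$ to $\eta_\lambda(0)$ --- not $(-1)^h\eta_{\lambda-\w{h}}(0)$ --- while the $tD_{t-1}$ piece has its columns $j\geq 2$ still indexed for $\lambda$ rather than for $\lambda-\w{c}$, so it is not a multiple of $\eta_{\lambda-\w{c}}(0)$ either. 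A concrete counterexample to the proposed split: for $\lambda=(2,2)$ one has $h=3$, $\eta_{(2,2)}(0)=3$, and the recurrence reads $3=(-1)^3\eta_{(1)}(0)+(-1)^{5}\cdot 3\cdot\eta_{(1,1)}(0)=0+3$, whereas your decomposition yields $(-1)^4=+1$ as its first summand and hence $2$ as the second; neither equals a term of the recurrence. What is actually needed on top of the column manipulation is a Laplace expansion along the first \emph{row} of the determinant (that is the operation that strips the arm of the principal hook and reduces to the $(r-1)\times(r-1)$ determinant for $\lambda-\w{h}$), and it is precisely in combining the row expansion with the column identity that the factor $h=\lambda_1+c_\lambda-1$ and the signs $(-1)^h$, $(-1)^{h+\lambda_1}$ emerge. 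That combination is the real content of Renteln's proof and is missing from your sketch.
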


To describe the Ku-Wong's recurrence formula for $\mathcal{F}(n,0)$, we need a new terminology.  For a partition $\lambda = (\lambda_1, \ldots, \lambda_r) \vdash n$, let $\w{l}_\lambda$ denote the last row of $\lambda$ and $l_\lambda$ denote the size of $\w{l}_\lambda$. Clearly, we have $l_{\lambda} = \lambda_r$. Let $\lambda - \w{l}_\lambda$ denote the partition obtained from $\lambda$ by deleting the last row. When $\lambda$ is clear from the context, we will replace $\w{l}_\lambda$, $l_\lambda$ by $\w{l}$ and $l$ respectively.

\begin{thm}\label{ku-wongformula} \textnormal{(\cite[Theorem 1.4]{ku-wong} Ku-Wong's Recurrence Formula)}
  For any partition $\lambda =(\lambda_1,\ldots,\lambda_r) \vdash n$, the eigenvalues of the derangement graph $\mathcal{F}(n,0)$ satisfy the following recurrence:
  \[ \eta_\lambda (0) = (-1)^{\lambda_r} \eta_{\lambda - \w{l}} (0) + (-1)^{r-1} \lambda_r \eta_{\lambda-\w{c}} (0) \]
  with initial condition $\eta_\emptyset (0)= 1$.
\end{thm}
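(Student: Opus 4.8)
The plan is to pass from the graph to characters and then to symmetric functions. Writing $f^{\lambda}=\chi_{\lambda}(1)$ and $d_{\lambda}=\sum_{\sigma\in D_{n}}\chi_{\lambda}(\sigma)=f^{\lambda}\,\eta_{\lambda}(0)$, the first step is to compute $d_{\lambda}$ in closed form. Applying inclusion--exclusion to the fixed points of $\sigma$ and the branching rule to the resulting restrictions $\chi_{\lambda}\!\downarrow_{\mathcal{S}_{m}}$ yields
\[
d_{\lambda}=\sum_{m=0}^{n}(-1)^{\,n-m}\frac{n!}{(n-m)!}\,f^{\lambda/(m)},
\]
where $f^{\lambda/(m)}$ is the number of standard Young tableaux of skew shape $\lambda/(m)$. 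Equivalently, the Frobenius characteristic of the indicator function of $D_{n}$ is the degree-$n$ component of $e^{-p_{1}}\sum_{m\ge 0}h_{m}$, so that $d_{\lambda}=n!\,\langle e^{-p_{1}}\sum_{m}h_{m},\,s_{\lambda}\rangle$ and multiplication by $p_{1}=s_{(1)}$ acts by adding a single box. This is the exact analogue of the identity that underlies Renteln's formula, and it is the engine I would use.

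With this engine the target is to show that the normalized quantity $\eta_{\lambda}(0)=d_{\lambda}/f^{\lambda}$ obeys the claimed recurrence, which I would prove by induction on $n=|\lambda|$. There are two routes. The most direct is to track, through the generating function above, the effect of removing the last row $\w{l}$ of $\lambda$: peeling the last row corresponds on the Schur side to a Pieri/branching step, and one wants to show that the part of $d_{\lambda}$ not accounted for by $d_{\lambda-\w{l}}$ is exactly the first-column contribution weighted by $\lambda_{r}\,d_{\lambda-\w{c}}$, up to the signs $(-1)^{\lambda_{r}}$ and $(-1)^{r-1}$. The second route assumes \thmref{rentelnformula} and reconciles the two recurrences: since both express $\eta_{\lambda}(0)$ and both contain the term $\eta_{\lambda-\w{c}}(0)$, it suffices to prove
\[
(-1)^{h_{\lambda}}\eta_{\lambda-\w{h}}(0)=(-1)^{\lambda_{r}}\eta_{\lambda-\w{l}}(0)+\big[(-1)^{r-1}\lambda_{r}-(-1)^{\,h_{\lambda}+\lambda_{1}}h_{\lambda}\big]\eta_{\lambda-\w{c}}(0),
\]
which one attacks by applying Renteln's formula to $\lambda-\w{l}$ and the inductive hypothesis to the strictly smaller partitions $\lambda-\w{h}$, $\lambda-\w{l}$, $\lambda-\w{c}$ (noting that $\lambda-\w{h}$ is $\lambda-\w{c}$ with its first row deleted), and then collecting terms.

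The main obstacle in either route is the same: reconciling the ``last row'' reduction with the ``first column'' reduction and checking that the sign exponents $\lambda_{r}$ and $r-1$ come out correctly. Concretely, the clean form of the recurrence lives at the level of the normalized eigenvalues, so the dimension ratios $f^{\lambda}/f^{\lambda-\w{l}}$ and $f^{\lambda}/f^{\lambda-\w{c}}$ (computed via the hook-length formula) must combine with the skew-tableau counts $f^{\lambda/(m)}$ to leave behind precisely the scalars $1$ and $\lambda_{r}$. I expect the bookkeeping of these signs, together with the degenerate cases $\lambda_{r}=1$ (where removing the last row drops a row) and $r=1$ (where there is no first column below the top row), to be the delicate part, whereas the underlying character identity is routine once the closed form for $d_{\lambda}$ is in place.
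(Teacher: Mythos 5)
First, a point of reference: the paper does not prove this statement at all --- it is quoted as \cite[Theorem 1.4]{ku-wong} and used as a black box --- so there is no in-paper argument to compare yours against; your proposal has to stand on its own.

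As written, it does not. The closed form $d_{\lambda}=\sum_{m=0}^{n}(-1)^{n-m}\frac{n!}{(n-m)!}f^{\lambda/(m)}$ is correct (it is essentially Renteln's starting point, and it checks out on $\lambda=(n)$, where it returns the derangement number), but everything after it is a description of what would need to be done rather than a derivation. In your route (a), the sentence ``peeling the last row corresponds on the Schur side to a Pieri/branching step'' is exactly where the theorem lives: deleting the last row of $\lambda$ is not a single branching step, and there is no off-the-shelf identity expressing $f^{\lambda/(m)}$ in terms of the skew tableau counts for $\lambda-\w{l}$ and $\lambda-\w{c}$ with precisely the weights $1$ and $\lambda_r$ and the signs $(-1)^{\lambda_r}$ and $(-1)^{r-1}$ emerging; producing and proving such an identity \emph{is} the content of the theorem. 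Route (b) fares no better: the displayed identity you reduce to is literally the difference of Renteln's recurrence and the one to be proved, hence equivalent to the statement itself, and the induction you gesture at has no visible closing mechanism because $\lambda-\w{h}$, $\lambda-\w{l}$ and $\lambda-\w{c}$ are partitions of three different integers $n-h$, $n-\lambda_r$ and $n-r$, so applying Renteln's formula and the inductive hypothesis to each generates terms at incomparable levels with no indicated cancellation. You yourself flag the sign bookkeeping and the degenerate cases $\lambda_r=1$ and $r=1$ as ``the delicate part'' --- but that delicate part is the theorem. To make this a proof you would need to carry out the skew-shape computation in full (for instance via a Jacobi--Trudi or lattice-path evaluation of the $f^{\lambda/(m)}$), or simply follow the argument given in \cite{ku-wong}.
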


The following theorem is called the  Alternating Sign Property (ASP) for $\mathcal{F}(n,0)$. 

\begin{thm}\label{ASP_0} \textnormal{(\cite[Theorem 1.2]{ku-wales},\cite[Theorem 1.3]{ku-wong})} Let $n\geq 2$. For any partition $\lambda = (\lambda_{1}, \ldots, \lambda_{r}) \vdash n$,
\begin{eqnarray}
\sgn(\eta_{\lambda}(0)) & = & (-1)^{|\lambda|-\lambda_{1}} \nonumber \\
& = & (-1)^{\# \textnormal{cells under the first row of $\lambda$}}\notag
\end{eqnarray}
where $\sgn(\eta_\lambda(0))$ is $1$ if $\eta_{\lambda}(0)$ is positive or $-1$ if $\eta_{\lambda}(0)$ is negative.
\end{thm}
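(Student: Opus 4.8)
The plan is to prove the statement by strong induction on $n=|\lambda|$, strengthening the claim to: for every $\lambda\vdash n$ with $n\ge 2$ one has $\eta_\lambda(0)\neq 0$ (so its sign is well defined) and $\sgn(\eta_\lambda(0))=(-1)^{m(\lambda)}$, where I write $m(\lambda)=|\lambda|-\lambda_1=\lambda_2+\cdots+\lambda_r$ for the number of cells under the first row. I would use the convention $\eta_\emptyset(0)=1$, which is consistent with $m(\emptyset)=0$, and record the one exceptional small value $\eta_{(1)}(0)=0$. The base case $n=2$ is checked directly: $\eta_{(2)}(0)=|D_2|=1>0$ and $\eta_{(1,1)}(0)=-1<0$, agreeing with $m((2))=0$ and $m((1,1))=1$.

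The engine is Ku-Wong's recurrence (Theorem \ref{ku-wongformula}),
\[ \eta_\lambda(0)=\underbrace{(-1)^{\lambda_r}\,\eta_{\lambda-\w{l}}(0)}_{T_1}+\underbrace{(-1)^{r-1}\lambda_r\,\eta_{\lambda-\w{c}}(0)}_{T_2}. \]
The decisive observation is that both summands carry \emph{the same} sign $(-1)^{m(\lambda)}$, so they reinforce rather than cancel. Indeed $\lambda-\w{l}=(\lambda_1,\ldots,\lambda_{r-1})$ satisfies $m(\lambda-\w{l})=m(\lambda)-\lambda_r$, whence the induction hypothesis gives $\sgn(T_1)=(-1)^{\lambda_r}(-1)^{m(\lambda)-\lambda_r}=(-1)^{m(\lambda)}$; and $\lambda-\w{c}=(\lambda_1-1,\ldots,\lambda_r-1)$ satisfies $m(\lambda-\w{c})=m(\lambda)-(r-1)$, whence $\sgn(T_2)=(-1)^{r-1}(-1)^{m(\lambda)-(r-1)}=(-1)^{m(\lambda)}$. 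Consequently, whenever $\eta_{\lambda-\w{l}}(0)$ and $\eta_{\lambda-\w{c}}(0)$ are both nonzero, $\eta_\lambda(0)$ is nonzero with sign precisely $(-1)^{m(\lambda)}$. This one computation carries the whole argument.

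What remains — and what I expect to be the only delicate point — is the bookkeeping for the degenerate reductions, together with the one-row case in which the reinforcement computation does not apply. If $r=1$, i.e.\ $\lambda=(n)$, the two terms need not agree in sign, but here $\eta_{(n)}(0)=|D_n|$ is the valency of $\mathcal{F}(n,0)$ and is positive for $n\ge 2$, matching $m((n))=0$; this case is dispatched directly. If $r\ge 2$, the induction hypothesis makes every proper subpartition of size $\ge 2$ have a nonzero eigenvalue and $\eta_\emptyset(0)=1$, so the only subpartition that can force a summand to vanish is $(1)$. One checks that $T_1=0$ forces $\lambda-\w{l}=(1)$, hence $\lambda=(1,1)$ (already a base case), while $T_2=0$ forces $\lambda-\w{c}=(1)$, hence $\lambda=(2,1^{r-1})$; these are disjoint, so at most one summand vanishes. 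In the surviving case $\lambda=(2,1^{r-1})$ with $r\ge 2$ we have $T_2=0$ and $\eta_\lambda(0)=T_1$, where $\lambda-\w{l}=(2,1^{r-2})$ has size $r\ge 2$ and is not $(1)$, so $T_1\neq 0$ with sign $(-1)^{m(\lambda)}$ by the induction hypothesis. Collecting the cases finishes the induction.

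I would note that Renteln's recurrence (Theorem \ref{rentelnformula}) could in principle replace Ku-Wong's, but removing the hook alters the first row and reindexes the parts beneath it, so the two resulting terms do not align with $(-1)^{m(\lambda)}$ nearly as cleanly; the row-versus-column split of Theorem \ref{ku-wongformula} is exactly what produces the sign reinforcement, which is why I would build the proof on it.
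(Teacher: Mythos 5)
The paper does not prove this theorem itself --- it is quoted from \cite{ku-wales} and \cite{ku-wong} --- so there is no internal proof to compare against; judged on its own, your argument is correct and is essentially the published one from \cite{ku-wong}. The sign-reinforcement computation is right: removing the last row changes $|\lambda|-\lambda_1$ by $\lambda_r$, removing the first column changes it by $r-1$, and in each case the prefactor $(-1)^{\lambda_r}$ resp.\ $(-1)^{r-1}$ in Theorem~\ref{ku-wongformula} exactly compensates, so both terms carry the sign $(-1)^{|\lambda|-\lambda_1}$ (this is precisely what the paper records as Corollary~\ref{cor_ku-wongformula}). Your bookkeeping of the degenerate cases is also complete: the only subpartition with vanishing eigenvalue is $(1)$, which occurs for $\lambda-\w{l}$ only when $\lambda=(1,1)$ (a base case) and for $\lambda-\w{c}$ only when $\lambda=(2,1^{r-1})$ (where the surviving term $T_1$ still has the right sign), and the one-row case is correctly dispatched via $\eta_{(n)}(0)=|D_n|>0$, where the two terms of the recurrence need not agree in sign. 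The only caveat worth stating explicitly is non-circularity: the argument is legitimate only because the recurrence of Theorem~\ref{ku-wongformula} is established independently of the ASP, which is indeed how \cite{ku-wong} proceeds.
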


The following corollary is a consequence of Theorem \ref{ku-wongformula} and \ref{ASP_0}.

\begin{cor}\label{cor_ku-wongformula}   For any partition $\lambda =(\lambda_1,\ldots,\lambda_r) \vdash n$ with $r\geq 2$, the absolute value of the eigenvalues of the derangement graph $\mathcal{F}(n,0)$ satisfy the following recurrence:
  \[ \vert\eta_\lambda (0)\vert = \vert \eta_{\lambda - \w{l}} (0)\vert + \lambda_r\vert  \eta_{\lambda-\w{c}} (0)\vert \]
  with initial condition $\vert\eta_\emptyset (0)\vert= 1$.
\end{cor}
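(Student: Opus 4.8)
The plan is to read off the result directly from the two stated theorems, with no new computation of eigenvalues: Theorem~\ref{ku-wongformula} writes $\eta_\lambda(0)$ as a signed sum of the two terms $(-1)^{\lambda_r}\eta_{\lambda-\widehat{l}}(0)$ and $(-1)^{r-1}\lambda_r\,\eta_{\lambda-\widehat{c}}(0)$, while Theorem~\ref{ASP_0} prescribes the sign of every eigenvalue. The key fact I would establish is that, for $r\geq 2$, both summands on the right-hand side carry the \emph{same} sign as $\eta_\lambda(0)$. Once this is in place, the elementary identity $|a+b|=|a|+|b|$, valid whenever $a$ and $b$ do not have strictly opposite signs, turns the Ku-Wong recurrence into the claimed additive one; the initial condition $|\eta_\emptyset(0)|=1$ is immediate from $\eta_\emptyset(0)=1$.

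To carry this out I would compute the three relevant signs from Theorem~\ref{ASP_0}, writing $|\lambda|=n$. The left side has sign $(-1)^{n-\lambda_1}$. For the first summand, deleting the last row leaves the first part equal to $\lambda_1$ — this is precisely where the hypothesis $r\geq 2$ enters — so $\lambda-\widehat{l}\vdash n-\lambda_r$ has sign $(-1)^{(n-\lambda_r)-\lambda_1}$, and multiplying by the prefactor $(-1)^{\lambda_r}$ gives $(-1)^{n-\lambda_1}$. For the second summand, $\lambda-\widehat{c}\vdash n-r$ has first part $\lambda_1-1$, hence sign $(-1)^{(n-r)-(\lambda_1-1)}$, and multiplying by $(-1)^{r-1}$ (and using $\lambda_r>0$) again yields $(-1)^{n-\lambda_1}$. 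Thus both summands agree in sign with $\eta_\lambda(0)$, which is exactly what is needed. I would remark that the restriction $r\geq 2$ is genuinely necessary: for $r=1$ one has $\lambda-\widehat{l}=\emptyset$ with first part no longer $\lambda_1$, the two summands can cancel rather than add, and the additive recurrence fails (e.g.\ $\lambda=(3)$).

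The only real obstacle is that Theorem~\ref{ASP_0} is stated for partitions of size at least $2$, whereas $\lambda-\widehat{l}$ or $\lambda-\widehat{c}$ may have size $0$ or $1$. I would dispatch these boundary cases directly. When a subpartition is empty, its eigenvalue is $\eta_\emptyset(0)=1>0$ and the sign bookkeeping above still closes (for the column term this happens exactly when $\lambda=(1^r)$, where $(-1)^{r-1}\cdot 1$ matches $(-1)^{n-\lambda_1}=(-1)^{n-1}$); the empty case never arises for the row term since $r\geq 2$ forces $\lambda-\widehat{l}$ to be nonempty. When a subpartition equals $(1)$, one checks $\eta_{(1)}(0)=0$, so that summand contributes $0$, imposes no sign condition, and the identity $|a+b|=|a|+|b|$ holds trivially. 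With these cases settled, the sign computation shows the two summands never cancel, and taking absolute values throughout Theorem~\ref{ku-wongformula} gives the asserted recurrence.
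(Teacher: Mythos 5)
Your argument is correct and is exactly the route the paper intends: the paper gives no explicit proof, merely noting that the corollary "is a consequence of" Theorem~\ref{ku-wongformula} and Theorem~\ref{ASP_0}, and your sign bookkeeping (both summands of the Ku--Wong recurrence carry the sign $(-1)^{n-\lambda_1}$ when $r\geq 2$, with the degenerate cases $\eta_\emptyset(0)=1$ and $\eta_{(1)}(0)=0$ handled separately) is precisely the verification being left to the reader. Nothing further is needed.
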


\section{Recurrence Formula}

For each $\sigma\in\mathcal S_n$, we denote it's conjugacy class by $\textnormal{Con}_{\mathcal S_n}(\sigma)$, i.e., $\textnormal{Con}_{\mathcal S_n}(\sigma)=\{\gamma^{-1}\sigma\gamma\ :\ \gamma\in \mathcal S_n\}$. Let $\mu\vdash n$ be the partition that represents $\textnormal{Con}_{\mathcal S_n}(\sigma)$. We shall denote the size of $\textnormal{Con}_{\mathcal S_n}(\sigma)$ by $N_{\mathcal S_n}(\mu)$.

Let $A\subseteq \mathcal{S}_{n}$ and $\alpha\in \mathcal{S}_{n}$. The set $\alpha^{-1}A\alpha$ is defined as
\begin{equation}
\alpha^{-1}A\alpha=\{ \alpha^{-1} \sigma\alpha\ :\ \sigma\in A\}.\notag
\end{equation}
Let $0\leq k<n$. Each $\beta\in \mathcal{S}_{n-k}$ can be considered as an element $\overline\beta$ of $\mathcal{S}_{n}$ by defining $\overline\beta(j)=\beta(j)$ for $1\leq j\leq n-k$ and $\overline\beta (j)=j$ for $n-k+1\leq j\leq n$. The $\overline \beta$ is called the \emph{extension} of $\beta$ to $\mathcal S_n$. 
The set of derangements $D_{n-k}$ in $\mathcal{S}_{n-k}$ can be considered as a subset of $\mathcal{S}_{n}$ ($\overline D_{n-k}=\{\overline\sigma\ :\ \sigma\in  D_{n-k}\}$). Furthermore, $\bigcup_{\sigma\in \mathcal{S}_{n}} \sigma^{-1}\overline D_{n-k}\sigma\subseteq \mathcal{S}(n,k)$.

Let $\gamma \in \mathcal{S}(n,k)$. Then $\gamma $ fixes exactly $k$ elements, i.e., $\gamma(i_j)=i_j$ for $j=1,2,\dots, k$ and $\gamma (a)\neq a$ for $a\in [n]\setminus \{i_1,i_2,\dots, i_k\}=\{b_1,b_2,\dots, b_{n-k}\}$. Let $\sigma_0(b_j)=j$ for $1\leq j\leq n-k$ and $\sigma_0(i_j)=n-k+j$ for $1\leq j\leq k$. Then  $\sigma_0\in \mathcal{S}_{n}$ and $\sigma_{0}^{-1}\gamma \sigma_{0}\in \overline D_{n-k}$. Hence, the following lemma follows.

\begin{lem}\label{structure_of_S(n,k)} \
\begin{equation}
\mathcal{S}(n,k)=\bigcup_{\sigma\in \mathcal{S}_{n}} \sigma^{-1}\overline D_{n-k}\sigma.\notag
\end{equation}
\end{lem}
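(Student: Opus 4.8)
The plan is to prove the set equality by establishing the two inclusions separately. The inclusion $\bigcup_{\sigma\in\mathcal S_n}\sigma^{-1}\overline D_{n-k}\sigma\subseteq\mathcal S(n,k)$ has essentially already been recorded in the discussion preceding the statement; the reverse inclusion is the substantive direction and is exactly what the explicit permutation $\sigma_0$ constructed above is designed to deliver.

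For the inclusion $\bigcup_{\sigma\in\mathcal S_n}\sigma^{-1}\overline D_{n-k}\sigma\subseteq\mathcal S(n,k)$, I would first observe that any $\overline\beta$ with $\beta\in D_{n-k}$ fixes precisely the $k$ points $n-k+1,\dots,n$ and moves every element of $\{1,\dots,n-k\}$, since $\beta$ is a derangement of $[n-k]$; hence $\overline\beta\in\mathcal S(n,k)$. Because conjugation in $\mathcal S_n$ preserves cycle type, and in particular the number of fixed points, every conjugate $\sigma^{-1}\overline\beta\sigma$ again fixes exactly $k$ points and therefore lies in $\mathcal S(n,k)$. Taking the union over all $\sigma\in\mathcal S_n$ and all $\beta\in D_{n-k}$ gives the inclusion.

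For the reverse inclusion, I would take $\gamma\in\mathcal S(n,k)$ with fixed points $i_1,\dots,i_k$ and moved points $b_1,\dots,b_{n-k}$, and let $\sigma_0$ be the permutation introduced above via $\sigma_0(b_j)=j$ and $\sigma_0(i_j)=n-k+j$. The role of $\sigma_0$ is to relabel the fixed points of $\gamma$ as the last $k$ coordinates and the moved points as the first $n-k$ coordinates, so that conjugating $\gamma$ by $\sigma_0$ produces a permutation fixing each of $n-k+1,\dots,n$ and, since $\gamma$ maps its set of moved points onto itself without fixed points, restricting to a derangement of $\{1,\dots,n-k\}$. This is precisely the computation recorded just before the lemma, yielding $\sigma_0^{-1}\gamma\sigma_0=\overline\beta$ for some $\beta\in D_{n-k}$. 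Rearranging gives $\gamma=\sigma_0\,\overline\beta\,\sigma_0^{-1}$, so setting $\sigma=\sigma_0^{-1}$ produces $\gamma=\sigma^{-1}\overline\beta\sigma\in\bigcup_{\sigma\in\mathcal S_n}\sigma^{-1}\overline D_{n-k}\sigma$, completing the inclusion.

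There is no real obstacle here: the lemma is fundamentally a bookkeeping statement asserting that every permutation fixing exactly $k$ points is conjugate to the canonical extension of some derangement of $[n-k]$. The only point requiring genuine care is the composition convention for conjugation, so that one confirms it is $\sigma_0$ (with the chosen orientation) that carries the fixed points to the final $k$ coordinates; since the union ranges over all $\sigma\in\mathcal S_n$ and the map $\sigma\mapsto\sigma^{-1}$ is a bijection of $\mathcal S_n$, either convention produces the same union and the argument is unaffected.
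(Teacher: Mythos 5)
Your proof is correct and follows essentially the same route as the paper: the forward inclusion via conjugation preserving the number of fixed points, and the reverse inclusion via the explicit relabelling permutation $\sigma_0$ satisfying $\sigma_0^{-1}\gamma\sigma_0\in\overline D_{n-k}$. The paper simply records this argument in the paragraph preceding the lemma, so there is nothing to add.
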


Let $\lambda \vdash n$. For a box with coordinate $(a,b)$ in the Ferrers diagram of $\lambda$, the \emph{hook-length} $h_{\lambda}(a,b)$ is the size of the set of all the boxes with coordinate $(i,j)$ where $i=a$ and $j\geq b$, or $i\geq a$ and $j=b$.  The following lemma is well-known \cite[4.12 on p. 50]{FH}.

\begin{lem}\label{dimension_f} \
\begin{equation}
\chi_{\lambda}(1)=\frac{n!}{\prod h_{\lambda}(a,b)},\notag
\end{equation}
where the product is over all the boxes $(a,b)$ in the Ferrers diagram of $\lambda$.
\end{lem}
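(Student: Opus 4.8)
The plan is to prove the hook-length formula by reducing the character degree to a count of standard Young tableaux and then evaluating the classical determinant for that count. First I would invoke the basic fact from the representation theory of $\mathcal{S}_n$ that $\chi_{\lambda}(1) = \dim S^{\lambda} = f^{\lambda}$, where $S^{\lambda}$ is the Specht module indexed by $\lambda$ and $f^{\lambda}$ is the number of standard Young tableaux of shape $\lambda$; this is precisely the kind of statement underlying Theorem~\ref{frobenius-schur} and is available to us. The goal then becomes the purely combinatorial identity $f^{\lambda} = n!/\prod_{(a,b)} h_{\lambda}(a,b)$. For this I would use the Frobenius--Aitken determinantal expression
\[
f^{\lambda} = n!\,\det\!\left[\frac{1}{(\lambda_i - i + j)!}\right]_{1\le i,j\le r},
\]
which follows from the Jacobi--Trudi identity specialized to the dimension. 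Writing the first-column hook lengths as $\ell_i = \lambda_i + r - i$ for $1 \le i \le r$, a standard Vandermonde reduction evaluates this determinant to
\[
f^{\lambda} = \frac{n!\,\prod_{1\le i<j\le r}(\ell_i - \ell_j)}{\prod_{i=1}^{r} \ell_i!}.
\]

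The key step that remains is to show the denominator here is precisely the product of all hook lengths. The heart of the matter is the row-by-row identity: in row $i$ of the Ferrers diagram of $\lambda$, the set of hook lengths $\{h_{\lambda}(i,b)\}$ is exactly $\{1, 2, \ldots, \ell_i\}$ with the values $\{\ell_i - \ell_j : j > i\}$ deleted. Granting this, the product of the hook lengths in row $i$ equals $\ell_i!/\prod_{j>i}(\ell_i - \ell_j)$, and multiplying over all rows assembles the cross-differences into the full Vandermonde product, giving
\[
\prod_{(a,b)} h_{\lambda}(a,b) = \frac{\prod_{i=1}^{r}\ell_i!}{\prod_{1\le i<j\le r}(\ell_i - \ell_j)},
\]
which is exactly the denominator above. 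Substituting completes the proof.

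The main obstacle is establishing that row-by-row hook-length description cleanly. It hinges on the observation that for a cell $(i,b)$ the leg of its hook is controlled by which later rows $\lambda_j$ (with $j > i$) still reach column $b$, and one must verify that the ``missing'' values in row $i$ are exactly the distinct differences $\ell_i - \ell_j$. An attractive alternative that sidesteps the determinant entirely is the Greene--Nijenhuis--Wilf hook-walk argument: one sets $F^{\lambda} := n!/\prod h_{\lambda}(a,b)$, notes the branching recurrence $f^{\lambda} = \sum_{c} f^{\lambda\setminus c}$ (sum over removable corners $c$) that $f^{\lambda}$ obviously satisfies, and then shows $F^{\lambda}$ obeys the same recurrence by interpreting each ratio $F^{\lambda\setminus c}/F^{\lambda}$ as the probability that a random hook walk terminates at corner $c$, so that these probabilities sum to $1$. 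Since the paper only needs the formula as a cited black box, either route suffices, but I would favor the determinantal one as it stays closest to the Frobenius-character machinery already in play.
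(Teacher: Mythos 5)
The paper does not prove this lemma at all: it is stated as a well-known fact with a citation to Fulton and Harris \cite{FH}, so there is no internal argument to compare yours against. Your proposal supplies the classical Frame--Robinson--Thrall proof: reduce $\chi_{\lambda}(1)$ to the number $f^{\lambda}$ of standard Young tableaux, invoke the Frobenius determinantal formula $f^{\lambda}=n!\det\bigl[1/(\lambda_i-i+j)!\bigr]$, evaluate that determinant as a Vandermonde-type quotient in the first-column hook lengths $\ell_i=\lambda_i+r-i$, and match the denominator with $\prod h_{\lambda}(a,b)$ via the row-by-row identity that the hooks in row $i$ form the set $\{1,\dots,\ell_i\}\setminus\{\ell_i-\ell_j : j>i\}$. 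This outline is correct (a quick check on $\lambda=(3,1)$, where $\ell_1=4$, $\ell_2=1$ and the row-one hooks are $\{4,2,1\}=\{1,2,3,4\}\setminus\{3\}$, confirms the key identity), and the one step you flag as the obstacle is indeed the only part requiring a careful argument; your sketch of it --- tracking which lower rows still reach column $b$ --- is the right idea, but it would need to be written out for the proof to be self-contained. The hook-walk alternative you mention is equally legitimate. Since the paper uses the formula purely as a cited black box, either route does more than the paper asks; if a proof were to be included, the determinantal one does sit closest to the character-theoretic machinery of Theorem~\ref{frobenius-schur}, exactly as you observe.
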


For convenience, let us denote the dimension of $\lambda$ by $f^{\lambda}$, i.e., $f^{\lambda}=\chi_{\lambda}(1)$. By Lemma \ref{structure_of_S(n,k)}, there are $\sigma_{k1}$, $\sigma_{k2}$, $\dots$, $\sigma_{ks_k}\in  D_{n-k}$ such that
\begin{equation}
\mathcal{S}(n,k)=\bigcup_{i=1}^{s_k} \textnormal{Con}_{\mathcal S_n}(\overline \sigma_{ki}),\label{reduction_conjugate}
\end{equation}
and $\sigma_{ki}$ is not conjugate to $\sigma_{kj}$ in $\mathcal S_{n-k}$ for $i\neq j$. Furthermore, 
\begin{equation}
D_{n-k}=\bigcup_{i=1}^{s_k} \textnormal{Con}_{\mathcal S_{n-k}}(\sigma_{ki}).\label{equation_derangement}
\end{equation}

 Note that $\chi_{\lambda}(\sigma)=\chi_{\lambda}(\beta)$ for all $\sigma\in \textnormal{Con}_{\mathcal S_n}(\beta)$.  For any $\beta \in {\mathcal S_n}$, let $\varphi(\beta)$ denote the partition of $n$ induced by the cycle structure of $\beta$. Let $\textnormal{Con}_{\mathcal S_n}(\beta)$ be represented by the partition $\varphi(\beta)\vdash n$.  Then by Theorem \ref{frobenius-schur} and Corollary \ref{cor_cayley_integer}, the eigenvalues of  $\mathcal{F}(n,k)$ are integers given by
  \begin{align}
    \eta_{\lambda} (k) &= \frac{1}{f^\lambda} \sum_{i=1}^{s_k} N_{\mathcal S_{n}}(\varphi(\overline \sigma_{ki}))\chi_\lambda( \varphi(\overline\sigma_{ki})), \label{(2)}
  \end{align}
where  $\chi_\lambda(\varphi(\overline\sigma_{ki}))=\chi_{\lambda}(\overline\sigma_{ki})$.

Assume that  $0<k<n$. Note that each  $\overline\sigma_{ki}$  ($1\leq i\leq s_k$) must consist of at least one $1$-cycle in its cycle decomposition. Therefore  $\varphi(\overline\sigma_{ki})=(\nu_{1},\nu_{2},\dots, \nu_{r}) \vdash n$ and  $\nu_{r}=1$. 
 Note that
$\varphi(\overline\sigma_{ki})-\w l_{\varphi(\overline\sigma_{ki})}=(\nu_{1},\nu_{2},\dots, \nu_{r-1})\vdash (n-1)$.
  We are now ready to state the following lemma which is a special case of  \cite[Theorem 3.4]{rec}.

\begin{lem}\label{Foulkes}   If the Ferrers diagrams obtained from $\lambda$ by removing $1$ node from the right hand side from any row of the diagram so that the resulting diagram will still be a partition of $(n-1)$ are those of $\mu_1, \ldots, \mu_q$, then
  \[\chi_\lambda(\varphi(\overline\sigma_{ki}))=  \sum_{j=1}^q \chi_{\mu_j} (\varphi(\overline\sigma_{ki})-\w l_{\varphi(\overline\sigma_{ki})}), \]
  for all $1\leq i\leq s_k$.
  \end{lem}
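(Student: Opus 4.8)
The plan is to recognize this statement as the classical branching rule for the symmetric group, specialized to a permutation that fixes a point, and then to translate the resulting representation-theoretic identity into the cycle-type language used in the lemma. First I would note that each $\overline\sigma_{ki}$ fixes the points $n-k+1, \ldots, n$; since $0 < k < n$ forces $k \ge 1$, in particular $\overline\sigma_{ki}$ fixes $n$ and hence lies in the stabilizer $H = \{g \in \mathcal S_n : g(n) = n\}$, which is canonically isomorphic to $\mathcal S_{n-1}$ acting on $[n-1]$. Under this identification the cycle type of $\overline\sigma_{ki}$, viewed in $\mathcal S_{n-1}$, is obtained from $\varphi(\overline\sigma_{ki}) = (\nu_1, \ldots, \nu_{r-1}, 1)$ by discarding the $1$-cycle that comes from the fixed point $n$; that is, it is exactly $\varphi(\overline\sigma_{ki}) - \w{l}_{\varphi(\overline\sigma_{ki})} = (\nu_1, \ldots, \nu_{r-1}) \vdash n-1$. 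Consequently the term $\chi_{\mu_j}(\varphi(\overline\sigma_{ki}) - \w{l})$ is precisely the value of the irreducible $\mathcal S_{n-1}$-character $\chi_{\mu_j}$ at $\overline\sigma_{ki}$ regarded as an element of $H \cong \mathcal S_{n-1}$.

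Next I would apply the branching rule: the restriction to $\mathcal S_{n-1}$ of the irreducible $\mathcal S_n$-module indexed by $\lambda$ decomposes as a multiplicity-free direct sum of the irreducible $\mathcal S_{n-1}$-modules indexed by those $\mu \vdash n-1$ that arise from $\lambda$ by deleting a single removable corner. A box is a removable corner precisely when it is the rightmost box of some row $a$ with $\lambda_a > \lambda_{a+1}$, which is exactly the condition ``remove $1$ node from the right of a row so that the result remains a partition''; hence the partitions so obtained are exactly $\mu_1, \ldots, \mu_q$. Taking characters of the restriction and evaluating at $g = \overline\sigma_{ki} \in H$ yields $\chi_\lambda(\overline\sigma_{ki}) = \sum_{j=1}^q \chi_{\mu_j}(\overline\sigma_{ki})$, and combining this with the identification of the previous paragraph gives the asserted formula for each $i$. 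Equivalently, one may read the identity off from the Murnaghan--Nakayama rule applied to the removal of a single $1$-cycle: a rim hook of size $1$ is one corner box of height $0$, so every sign equals $+1$ and the signed sum collapses to the plain sum above.

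Since the branching rule is standard, the content of the argument is purely bookkeeping and I do not expect a genuine obstacle. The only points deserving care are the dictionary between the two descriptions of the $\mu_j$ --- the ``removable rightmost node'' description in the statement versus the removable-corner description in the branching rule --- and the verification that deleting the fixed point $n$ from $\overline\sigma_{ki}$ corresponds, at the level of cycle types, exactly to deleting the last row $\w{l}$ of $\varphi(\overline\sigma_{ki})$; both are immediate once one records that $\varphi(\overline\sigma_{ki})$ has last part equal to $1$.
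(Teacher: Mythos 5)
Your argument is correct and complete. The paper itself offers no proof of this lemma: it simply declares it to be a special case of Foulkes' recurrence for symmetric group characters (Theorem 3.4 of the cited paper of Foulkes), so your contribution is to supply the standard derivation that the citation delegates. Your route --- identify the stabilizer of $n$ with $\mathcal S_{n-1}$, observe that $\overline\sigma_{ki}$ lies in it because $k\ge 1$ forces at least one fixed point, match the cycle type of the restricted permutation with $\varphi(\overline\sigma_{ki})-\w l_{\varphi(\overline\sigma_{ki})}$ using the fact that the last part of $\varphi(\overline\sigma_{ki})$ equals $1$, and then invoke the multiplicity-free branching rule $\chi_\lambda\!\downarrow_{\mathcal S_{n-1}}=\sum_j\chi_{\mu_j}$ --- is exactly the argument one would extract from Foulkes (or, as you note, from the Murnaghan--Nakayama rule for a rim hook of size $1$, where all signs are $+1$). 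The two points you flag as needing care (the dictionary between ``removable rightmost node of a row'' and ``removable corner,'' and the identification of deleting the fixed point with deleting the last row of the cycle-type partition) are indeed the only places where the bookkeeping could slip, and you handle both correctly. The only difference in emphasis is that Foulkes' theorem is a more general recurrence (removing a part of arbitrary size, with attendant signs), of which the lemma is the trivial-sign special case; your direct branching-rule proof buys self-containedness at no real cost in length.
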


\begin{eg}
  Let $n=7$ and $\lambda =(3,3,1)$, then
  \begin{align*}
    \chi_{(3,3,1)} ((6,1)) &= \chi_{(3,3)} ((6)) + \chi_{(3,2,1)} ((6)), \\
    \chi_{(3,3,1)} ((4,2,1)) &= \chi_{(3,3)} ((4,2)) + \chi_{(3,2,1)} ((4,2)), \\
    \chi_{(3,3,1)} ((3,3,1)) &= \chi_{(3,3)} ((3,3)) + \chi_{(3,2,1)} ((3,3)), \\
    \chi_{(3,3,1)} ((2,2,2,1)) &= \chi_{(3,3)} ((2,2,2)) + \chi_{(3,2,1)} ((2,2,2)).
  \end{align*}
\end{eg}

We shall need the following lemma \cite[(7.18) on p. 299]{stanley}.

\begin{lem} \label{size} 
  Let $\lambda=(n^{m_n},\ldots,2^{m_2},1^{m_1})\vdash n$ and $z_\lambda =\prod_{j=1}^n (j^{m_j} m_j!)$, then the size of the conjugacy class represented by $\lambda$ is
  \[ N_{\mathcal S_n}(\lambda) = \frac{n!}{z_\lambda}. \]
\end{lem}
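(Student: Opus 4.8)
The plan is to prove this via the orbit--stabilizer theorem applied to the conjugation action of $\mathcal S_n$ on itself. Fix any permutation $\sigma$ whose cycle type is $\lambda$. The conjugacy class $\textnormal{Con}_{\mathcal S_n}(\sigma)$ is precisely the orbit of $\sigma$ under conjugation, so orbit--stabilizer gives $N_{\mathcal S_n}(\lambda) = n!/|C_{\mathcal S_n}(\sigma)|$, where $C_{\mathcal S_n}(\sigma) = \{\tau\in\mathcal S_n : \tau\sigma\tau^{-1}=\sigma\}$ is the centralizer of $\sigma$. The entire problem thereby reduces to showing $|C_{\mathcal S_n}(\sigma)| = z_\lambda$.

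To compute the centralizer I would use the elementary fact that conjugation by $\tau$ transforms each cycle $(a_1,a_2,\ldots,a_j)$ of $\sigma$ into $(\tau(a_1),\tau(a_2),\ldots,\tau(a_j))$. Hence $\tau\in C_{\mathcal S_n}(\sigma)$ if and only if $\tau$ carries the set of cycles of $\sigma$ onto itself, necessarily sending cycles of length $j$ to cycles of length $j$. Counting such $\tau$ then factors into independent choices over each cycle length $j$: first $\tau$ may permute the $m_j$ cycles of length $j$ among themselves in any of $m_j!$ ways, and then, for each of these $m_j$ cycles, there are exactly $j$ choices of cyclic rotation specifying how $\tau$ maps that cycle onto its designated image. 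This produces $j^{m_j}m_j!$ admissible configurations for the length-$j$ part, and since the choices across distinct lengths are independent, $|C_{\mathcal S_n}(\sigma)| = \prod_{j=1}^n (j^{m_j}m_j!) = z_\lambda$, which is the desired count.

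Alternatively, one can argue by direct enumeration without passing through the stabilizer: filling an arbitrary bijection $[n]\to[n]$ into the cycle template of shape $\lambda$ realizes every permutation of cycle type $\lambda$, but each such permutation is realized exactly $z_\lambda$ times --- a factor of $j$ for each of the $m_j$ cycles of length $j$ (the choice of starting element inside a cycle) and a factor $m_j!$ from reordering the $m_j$ cycles of length $j$ among themselves. Dividing $n!$ by this overcount yields the same formula $n!/z_\lambda$.

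The only point that requires genuine care --- the ``hard part,'' such as it is --- is verifying that the length-$j$ block contributes exactly $j^{m_j}m_j!$: one must confirm that the rotation factor $j$ and the reordering factor $m_j!$ are truly independent, and that no admissible $\tau$ (equivalently, no filling of the template) is counted twice. I expect this to amount to exhibiting an explicit bijection between the centralizing permutations and tuples of (block-permutation, per-cycle rotation) data and checking it is one-to-one; the argument is routine but should be spelled out so that the bookkeeping is manifestly correct.
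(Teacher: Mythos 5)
Your proof is correct. Note, however, that the paper offers no proof of this lemma at all: it is quoted verbatim from Stanley (Enumerative Combinatorics 2, (7.18) on p.~299) as a known fact, so there is no in-paper argument to compare against. Your orbit--stabilizer route is the standard one: the identity $N_{\mathcal S_n}(\lambda)=n!/|C_{\mathcal S_n}(\sigma)|$ reduces everything to computing the centralizer, and your count $|C_{\mathcal S_n}(\sigma)|=\prod_j j^{m_j}m_j!$ is right --- a centralizing $\tau$ is determined exactly by a length-preserving permutation of the cycles together with a rotation offset in each cycle, and these data are independent, so no double counting occurs. The alternative template-filling argument you sketch is the same count viewed from the other side and is equally valid. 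The ``hard part'' you flag is genuinely routine; either version would serve as a complete proof of the cited fact.
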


\begin{lem} \label{prepre}
  Let $\lambda=(\lambda_1,\dots, \lambda_r) \vdash (n-k)$ be a derangement, i.e., $\lambda_r\geq 2$. If  
  \[ \nu = ( \lambda, 1^k)\vdash n,\ \textnormal{and}\ \mu = (\lambda, 1^{k-1})\vdash (n-1), \]
  then
  \[ N_{\mathcal S_n}(\nu) = \frac{n}{k} N_{\mathcal S_{n-1}}(\mu). \]
\end{lem}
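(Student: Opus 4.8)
The plan is to apply Lemma~\ref{size} (Stanley's class-size formula) directly to both partitions and compare the resulting expressions for $z_\nu$ and $z_\mu$. First I would record the cycle-type multiplicities carefully. Since $\lambda=(\lambda_1,\dots,\lambda_r)$ is a derangement, every part of $\lambda$ is at least $2$; write $m_j$ for the multiplicity of $j$ as a part of $\lambda$, so in particular $m_1=0$. Forming $\nu=(\lambda,1^k)$ then leaves the multiplicities $m_j$ for $j\geq 2$ unchanged and sets the multiplicity of the part $1$ equal to $k$, while forming $\mu=(\lambda,1^{k-1})$ also leaves the $m_j$ ($j\geq 2$) unchanged but sets the multiplicity of the part $1$ equal to $k-1$.

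Next, using the definition $z_\lambda=\prod_j (j^{m_j}m_j!)$, I would factor out the common contribution $P=\prod_{j\geq 2} j^{m_j}m_j!$ coming from the parts of size at least $2$, which is identical for $\nu$ and $\mu$. The only part whose contribution differs is $1$: it contributes $1^{k}\,k!=k!$ to $z_\nu$ and $1^{k-1}\,(k-1)!=(k-1)!$ to $z_\mu$. Hence $z_\nu=k!\,P$ and $z_\mu=(k-1)!\,P$.

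Finally, I would invoke Lemma~\ref{size} to write $N_{\mathcal S_n}(\nu)=n!/z_\nu=n!/(k!\,P)$ and $N_{\mathcal S_{n-1}}(\mu)=(n-1)!/z_\mu=(n-1)!/((k-1)!\,P)$, and take the quotient. The common factor $P$ cancels, and the remaining ratio $\dfrac{n!/k!}{(n-1)!/(k-1)!}=\dfrac{n!\,(k-1)!}{k!\,(n-1)!}$ simplifies to $\dfrac{n}{k}$, which rearranges to $N_{\mathcal S_n}(\nu)=\tfrac{n}{k}\,N_{\mathcal S_{n-1}}(\mu)$, exactly the claim.

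There is no genuine obstacle here: the statement is a direct bookkeeping computation with the class-size formula. The only point needing (a trivial amount of) care is the hypothesis that $\lambda$ is a derangement, i.e. $\lambda_r\geq 2$. This guarantees $m_1=0$ for $\lambda$ and therefore that the multiplicity of the part $1$ is exactly $k$ in $\nu$ and exactly $k-1$ in $\mu$; if $\lambda$ already contained parts equal to $1$, the contribution of the $1$'s to $z_\nu$ and $z_\mu$ would involve factorials of the form $(k+m_1)!$ and $(k-1+m_1)!$, and the clean ratio $n/k$ would no longer hold.
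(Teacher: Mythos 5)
Your proposal is correct and is essentially identical to the paper's own (very brief) proof: both apply Lemma~\ref{size}, observe that the parts of size at least $2$ contribute the common factor $z_\lambda$ (your $P$) while the $1$'s contribute $k!$ to $z_\nu$ and $(k-1)!$ to $z_\mu$, and take the quotient. Your remark on why the derangement hypothesis is needed is a correct elaboration of what the paper leaves implicit.
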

\begin{proof} The lemma follows from Lemma \ref{size}, by noting that 
  \[ N_{\mathcal S_{n}}(\nu) = \frac{n!}{z_\lambda \times 1 \cdot k!}, \ \textnormal{and}\  N_{\mathcal S_{n-1}}(\mu) = \frac{(n-1)!}{z_\lambda \times 1 \cdot (k-1)!}. \]
  \end{proof}

\begin{thm} \label{eigen-n1} Let $0<k<n$ and $\lambda \vdash n$. If the Ferrers diagrams obtained from $\lambda$ by removing $1$ node from the right hand side from any row of the diagram so that the resulting diagram will still be a partition of $(n-1)$ are those of $\mu_1, \ldots, \mu_q$, then
  \begin{align}
  \eta_{\lambda} (k) &= \frac{n}{k f^\lambda} \sum_{j=1}^q f^{\mu_j} \eta_{\mu_j} (k-1).\notag
  \end{align}
\end{thm}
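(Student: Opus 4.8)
The plan is to start from the character-theoretic formula for the eigenvalues, namely equation \eqref{(2)}, which expresses $\eta_\lambda(k)$ as a weighted sum of character values $\chi_\lambda(\varphi(\overline\sigma_{ki}))$ over the conjugacy-class representatives $\overline\sigma_{k1},\ldots,\overline\sigma_{ks_k}$ coming from the derangements $\sigma_{ki}\in D_{n-k}$. Each such $\overline\sigma_{ki}$ has cycle type $\varphi(\overline\sigma_{ki})=(\lambda^{(i)},1^k)$ where $\lambda^{(i)}\vdash n-k$ is a derangement cycle type. The key first step is to apply Lemma \ref{Foulkes} to rewrite $\chi_\lambda(\varphi(\overline\sigma_{ki}))$ as $\sum_{j=1}^q \chi_{\mu_j}(\varphi(\overline\sigma_{ki})-\w l_{\varphi(\overline\sigma_{ki})})$, where removing the last $1$-cycle produces the type $(\lambda^{(i)},1^{k-1})\vdash n-1$. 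Substituting this into \eqref{(2)} and interchanging the order of summation, I would obtain
\[
\eta_\lambda(k)=\frac{1}{f^\lambda}\sum_{j=1}^q\sum_{i=1}^{s_k} N_{\mathcal S_n}(\varphi(\overline\sigma_{ki}))\,\chi_{\mu_j}\bigl(\varphi(\overline\sigma_{ki})-\w l\bigr).
\]

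The crux of the argument is then to recognize the inner sum over $i$ as essentially the eigenvalue $\eta_{\mu_j}(k-1)$, up to a normalizing factor. For this I would invoke Lemma \ref{prepre}: writing $\nu=(\lambda^{(i)},1^k)$ and $\mu=(\lambda^{(i)},1^{k-1})$, it gives $N_{\mathcal S_n}(\varphi(\overline\sigma_{ki}))=\tfrac{n}{k}N_{\mathcal S_{n-1}}(\varphi(\overline\sigma_{ki})-\w l)$, which converts the size of the conjugacy class in $\mathcal S_n$ into $\tfrac{n}{k}$ times the corresponding size in $\mathcal S_{n-1}$. Pulling out the factor $\tfrac{n}{k}$, the inner sum becomes $\tfrac{n}{k}\sum_{i=1}^{s_k} N_{\mathcal S_{n-1}}(\varphi(\overline\sigma_{ki})-\w l)\,\chi_{\mu_j}(\varphi(\overline\sigma_{ki})-\w l)$.

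The final step is to identify this inner sum with $f^{\mu_j}\eta_{\mu_j}(k-1)$. This is where the indexing must be handled with care, and I expect it to be the main obstacle. The representatives $\sigma_{ki}\in D_{n-k}$ are, by \eqref{equation_derangement}, exactly a transversal of the conjugacy classes of derangements in $\mathcal S_{n-k}$; after deleting one fixed point the types $(\lambda^{(i)},1^{k-1})$ are precisely the representatives $\overline\sigma_{(k-1),i}$ that define $\mathcal S(n-1,k-1)$ via the analogue of \eqref{reduction_conjugate} at level $n-1$. I would need to verify that the $\sigma_{ki}$ (viewed now in $\mathcal S_{n-1}$ with one extra fixed point) run exactly once over the classes appearing in the $\eta_{\mu_j}(k-1)$ formula \eqref{(2)}, with no repetition or omission—using that distinct $\sigma_{ki}$ remain non-conjugate after appending a $1$-cycle. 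Granting this bijection, the inner sum is exactly $\sum_i N_{\mathcal S_{n-1}}(\varphi(\overline\sigma_{(k-1),i}))\chi_{\mu_j}(\varphi(\overline\sigma_{(k-1),i}))=f^{\mu_j}\eta_{\mu_j}(k-1)$ by \eqref{(2)} applied at level $n-1$. Combining everything yields
\[
\eta_\lambda(k)=\frac{n}{k f^\lambda}\sum_{j=1}^q f^{\mu_j}\eta_{\mu_j}(k-1),
\]
as claimed.
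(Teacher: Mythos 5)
Your proposal is correct and follows essentially the same route as the paper's proof: expand $\chi_\lambda$ via Lemma \ref{Foulkes}, convert class sizes with Lemma \ref{prepre}, interchange the sums, and identify the inner sum with $f^{\mu_j}\eta_{\mu_j}(k-1)$ by observing that the extensions of the $\sigma_{ki}$ to $\mathcal S_{n-1}$ give exactly the conjugacy classes whose union is $\mathcal S(n-1,k-1)$. The bijection you flag as the main obstacle is precisely the step the paper records as $\mathcal{S}(n-1,k-1)=\bigcup_{i=1}^{s_k}\textnormal{Con}_{\mathcal S_{n-1}}(\overline{\overline\sigma}_{ki})$, and it holds for the reason you indicate: distinct derangement cycle types in $\mathcal S_{n-k}$ remain distinct after appending $1$-cycles.
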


\begin{proof}  Suppose $k=1$. By equation (\ref{(2)}),
  \begin{align*}
        \eta_{\lambda} (1) &= \frac{1}{f^\lambda} \sum_{i=1}^{s_1} N_{\mathcal S_{n}}(\varphi(\overline\sigma_{1i}))\chi_\lambda(\varphi(\overline\sigma_{1i})).
  \end{align*}
   Note that  $\overline\sigma_{1i}$ consists of exactly one $1$-cycle and $\varphi(\overline\sigma_{1i})=(\nu_{1},\nu_{2},\dots, \nu_{r})\vdash n$ with $\nu_r=1$, $\nu_{r-1}\geq 2$. Therefore $\varphi(\overline\sigma_{1i})-\w l_{\varphi(\overline\sigma_{1i})}=(\nu_{1},\nu_{2},\dots, \nu_{r-1})\vdash (n-1)$
   is a derangement. In fact, $\varphi(\overline\sigma_{1i})-\w l_{\varphi(\overline\sigma_{1i})}$ is the partition of $(n-1)$ that represents $\textnormal{Con}_{\mathcal S_{n-1}}(\sigma_{1i})$.    By Lemma \ref{Foulkes} and Lemma \ref{prepre},
   \begin{align*}
  \eta_{\lambda} (1) &= \frac{1}{f^\lambda} \sum_{i=1}^{s_1} N_{\mathcal S_{n}}(\varphi(\overline\sigma_{1i}))\left ( \sum_{j=1}^q \chi_{\mu_j} (\varphi(\overline\sigma_{1i})-\w l_{\varphi(\overline\sigma_{1i})})  \right) \\
  &= \frac{1}{f^\lambda} \sum_{i=1}^{s_1} nN_{\mathcal S_{n-1}}(\varphi(\overline\sigma_{1i})-\w l_{\varphi(\overline\sigma_{1i})})\left ( \sum_{j=1}^q \chi_{\mu_j} (\varphi(\overline\sigma_{1i})-\w l_{\varphi(\overline\sigma_{1i})})  \right) \\
  &= \frac{n}{f^\lambda} \sum_{j=1}^q \left(\sum_{i=1}^{s_1} N_{\mathcal S_{n-1}}(\varphi(\overline\sigma_{1i})-\w l_{\varphi(\overline\sigma_{1i})}) \chi_{\mu_j} (\varphi(\overline\sigma_{1i})-\w l_{\varphi(\overline\sigma_{1i})}) \right)  \\
  &= \frac{n}{f^\lambda} \sum_{j=1}^q f^{\mu_j} \eta_{\mu_j} (0),
 \end{align*}
where the last equality follows from equations (\ref{equation_derangement}) and (\ref{(2)}). Thus, the theorem holds for $k=1$. 

Suppose $k>1$. (We note here that the proof for $k>1$ is similar to the proof for $k=1$. The reason we distinguish them is to make the proof  easier to comprehend.)

 By equation (\ref{(2)}),
  \begin{align*}
    \eta_{\lambda} (k) &= \frac{1}{f^\lambda} \sum_{i=1}^{s_k} N_{\mathcal S_{n}}(\varphi(\overline \sigma_{ki}))\chi_\lambda( \varphi(\overline\sigma_{ki})).
  \end{align*}
Note that $\overline\sigma_{ki}$ consists of exactly k's $1$-cycle  and $\varphi(\overline\sigma_{ki})=(\nu_{1},\nu_{2},\dots, \nu_{r})\vdash n$ with $\nu_j=1$ for $r-k+1\leq j\leq r$ and $\nu_{r-k}\geq 2$. Let $\overline {\overline\sigma}_{ki}$ be the extension of $\sigma_{ki}$ to $\mathcal S_{n-1}$, i.e., $\overline {\overline\sigma}_{ki}(j)=\sigma_{ki}(j)$ for $1\leq j\leq n-k$ and $\overline {\overline\sigma}_{ki}(j)=j$ for $n-k+1\leq j\leq n-1$.  Note that $\varphi(\overline\sigma_{ki})-\w l_{\varphi(\overline\sigma_{ki})}=(\nu_{1},\nu_{2},\dots, \nu_{r-1})\vdash (n-1)$
is the partition of $(n-1)$ that represents $\textnormal{Con}_{\mathcal S_{n-1}}(\overline {\overline\sigma}_{ki})$. Furthermore,
\begin{equation}
\mathcal{S}(n-1,k-1)=\bigcup_{i=1}^{s_k} \textnormal{Con}_{\mathcal S_{n-1}}(\overline{\overline \sigma}_{ki}).\notag
\end{equation}
 Therefore, by Theorem \ref{frobenius-schur}, 
  \begin{align}
    \eta_{\mu_j} (k-1) &= \frac{1}{f^{\mu_j}} \sum_{i=1}^{s_k} N_{\mathcal S_{n-1}}(\varphi(\overline\sigma_{ki})-\w l_{\varphi(\overline\sigma_{ki})})\chi_{\mu_j}(\varphi(\overline\sigma_{ki})-\w l_{\varphi(\overline\sigma_{ki})}). \notag
  \end{align}
 By Lemma \ref{Foulkes} and Lemma \ref{prepre},
   \begin{align*}
  \eta_{\lambda} (k) &= \frac{1}{f^\lambda} \sum_{i=1}^{s_k} N_{\mathcal S_{n}}(\varphi(\overline\sigma_{ki}))\left ( \sum_{j=1}^q \chi_{\mu_j} (\varphi(\overline\sigma_{ki})-\w l_{\varphi(\overline\sigma_{ki})})  \right) \\
  &= \frac{1}{f^\lambda} \sum_{i=1}^{s_k} \frac{n}{k}N_{\mathcal S_{n-1}}(\varphi(\overline\sigma_{ki})-\w l_{\varphi(\overline\sigma_{ki})})\left ( \sum_{j=1}^q \chi_{\mu_j} (\varphi(\overline\sigma_{ki})-\w l_{\varphi(\overline\sigma_{ki})})  \right) \\
  &= \frac{n}{kf^\lambda} \sum_{j=1}^q \left(\sum_{i=1}^{s_k} N_{\mathcal S_{n-1}}(\varphi(\overline\sigma_{ki})-\w l_{\varphi(\overline\sigma_{ki})}) \chi_{\mu_j} (\varphi(\overline\sigma_{1i})-\w l_{\varphi(\overline\sigma_{1i})}) \right)  \\
  &= \frac{n}{kf^\lambda} \sum_{j=1}^q f^{\mu_j} \eta_{\mu_j} (k-1).
 \end{align*}
Hence, the theorem holds for $k>1$. 
\end{proof}

\newpage
\section{Inequalities for the eigenvalues of $\mathcal{F}(n,0)$}

For convenience, if $\lambda=(n)$, we set
\begin{equation}
d_n=\eta_\lambda (0).\notag
\end{equation}
By Theorem \ref{ku-wongformula}, 
\begin{equation}
d_n=(-1)^n+nd_{n-1},\ \ \textnormal{for $n\geq 1$,}\label{equation_d}
\end{equation} 
where $d_0=1$. Note that $d_1=0$ and $d_n> 0$ for all $n\neq 1$. Furthermore, for $n\geq 3$,
\begin{align}
d_n&=(-1)^n+nd_{n-1}\notag\\
&\geq nd_{n-1} -1\label{equation_d1}\\
&= (n-1)d_{n-1}+d_{n-1}-1\geq (n-1)d_{n-1}.\label{equation_d2}
\end{align}

\begin{lem} \label{res1}
  Let  $1 \leq p \leq n-1$. If $\lambda = (n-p,1^p)$ and $\mu = (n-p+1,1^{p-1})$ are partitions of $[n]$, then
  \[ f^\lambda \left|\eta_\lambda (0) \right| \leq f^\mu \left|\eta_\mu (0) \right|. \]
  Furthermore, equality holds if and only if $p=1$ or $n-p=1$.
\end{lem}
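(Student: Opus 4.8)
The plan is to exploit the fact that both $\lambda=(n-p,1^p)$ and $\mu=(n-p+1,1^{p-1})$ are \emph{hook} shapes, for which every quantity in sight has a clean closed form. First I would record the two dimensions. By the hook-length formula (Lemma~\ref{dimension_f}), the product of the hook lengths of $(n-p,1^p)$ is $n\,(n-p-1)!\,p!$ (the corner box contributes $n$, the arm contributes $(n-p-1)!$, and the leg contributes $p!$), so that $f^{(n-p,1^p)}=\binom{n-1}{p}$, and likewise $f^{\mu}=\binom{n-1}{p-1}$.

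Next I would produce a closed form for the eigenvalue at a hook. Since a hook is its own hook $\widehat h$, Renteln's recurrence (Theorem~\ref{rentelnformula}) collapses: here $h=n$, so $\lambda-\widehat h=\emptyset$ with $\eta_\emptyset(0)=1$, while $\lambda-\widehat c=(n-p-1)$ is a single row with $\eta_{(n-p-1)}(0)=d_{n-p-1}$. This yields $\eta_{(n-p,1^p)}(0)=(-1)^n+(-1)^p\,n\,d_{n-p-1}$. Combining this with the sign predicted by the ASP for $\mathcal F(n,0)$ (Theorem~\ref{ASP_0}), namely $\sgn(\eta_{(n-p,1^p)}(0))=(-1)^{p}$, I obtain the uniform formula $\bigl|\eta_{(n-q,1^q)}(0)\bigr|=n\,d_{n-q-1}+(-1)^{n+q}$, valid for $0\le q\le n-1$; applying it with $q=p$ and $q=p-1$ disposes of both $\lambda$ and $\mu$.

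With these closed forms in hand the inequality is purely computational. After substituting and cancelling the common factor $\binom{n-1}{p-1}$ (using $\binom{n-1}{p}=\tfrac{n-p}{p}\binom{n-1}{p-1}$), I would set $m=n-p$ and clear the positive denominator $n-m$; the claimed inequality is then equivalent to $(n-m)\,d_m-m\,d_{m-1}\ge(-1)^m$. Now I invoke the derangement recurrence (\ref{equation_d}) in the form $m\,d_{m-1}=d_m-(-1)^m$, which makes the expression telescope down to $(n-m-1)\,d_m\ge 0$. This holds because $m=n-p\le n-1$ forces $n-m-1\ge 0$, and $d_m\ge 0$ for every $m$. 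Equality occurs exactly when $(n-m-1)d_m=0$, i.e.\ when $n-m-1=0$ (that is, $p=1$) or $d_m=0$ (that is, $m=1$, i.e.\ $n-p=1$), which is precisely the stated equality condition.

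The routine algebra is not the obstacle; the one place demanding care is the derivation of the closed form for $|\eta_\lambda(0)|$ and the correct tracking of signs through Renteln's recurrence and the ASP, together with the degenerate cases $n-p-1=0$ (where $\lambda-\widehat c$ is the empty partition and $d_0=1$) and $p=1$ (where $\mu$ degenerates to the single row $(n)$, so $|\eta_\mu(0)|=d_n$). Once the closed form is verified on these boundary cases, the reduction to $(n-m-1)d_m\ge 0$ is immediate and the equality analysis follows at once.
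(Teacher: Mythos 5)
Your proof is correct and follows essentially the same route as the paper: the hook-length formula for $f^\lambda/f^\mu$, Renteln's recurrence to reduce the hook eigenvalues to expressions in $d_{n-p-1}$, and the recurrence $d_m=(-1)^m+md_{m-1}$ to finish, with the same equality analysis. The only (minor) difference is that you invoke Theorem \ref{ASP_0} to strip the absolute values and get the uniform formula $\bigl|\eta_{(n-q,1^q)}(0)\bigr|=nd_{n-q-1}+(-1)^{n+q}$, which lets you telescope directly to $(n-m-1)d_m\ge 0$ instead of running the paper's two-case parity analysis.
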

\begin{proof} Note that
  \[ f^\lambda = \frac{n!}{H^\lambda} = \frac{n!}{n (n-p-1)! p!} \quad \text{and} \quad f^\mu = \frac{n!}{H^\mu} = \frac{n!}{n (n-p)! (p-1)!}. \]
 By Theorem \ref{rentelnformula} and equation (\ref{equation_d}),
  \begin{align*}
     |\eta_\lambda(0)| &= \left| 1 + (-1)^{n-p} n d_{n-p-1} \right|, \\
     |\eta_\mu(0)| &= \left| 1+(-1)^{n-p+1} n d_{n-p} \right|\\
     &= \left| 1-n+(-1)^{n-p+1} n(n-p) d_{n-p-1} \right|.
  \end{align*}
  Therefore, it is sufficient to show that
  \begin{align}
     P_L=(n-p) \left| 1 + (-1)^{n-p} n d_{n-p-1} \right| & \leq p\left|1-n+(-1)^{n-p+1} n(n-p) d_{n-p-1} \right|=P_R. \notag
   \end{align}
  
\noindent
{\bf Case 1.} Suppose $n$ and $p$ are of  same parity (both even or both odd). Then
  \begin{align*}
     P_R -P_L&= p( n(n-p) d_{n-p-1}+n-1)- (n-p) (1 + n d_{n-p-1}) \\
     &= n(n-p)(p-1)d_{n-p-1}+(p-1)n\geq 0.
  \end{align*}
Note that $P_R -P_L=0$ if and only if $p=1$.
  
  \vskip 0.5cm
  \noindent
{\bf Case 2.} Suppose $n$ and $p$ are of different parity (one even and one odd). Then $d_{n-p-1}\neq 0$, for $n-p\neq 2$. Therefore 
   \begin{align*}
     P_R -P_L&= p(1-n+n(n-p) d_{n-p-1} )- (n-p) ( n d_{n-p-1}-1) \\
     &= n(n-p)(p-1)d_{n-p-1}-(p-1)n\\
      &= n(p-1)((n-p)d_{n-p-1}-1)\geq 0.
  \end{align*}
  Note that $P_R -P_L=0$ if and only if $p=1$ or $n-p=1$.
  \end{proof}

\begin{lem} \label{case_k=1}
 Let $m\geq q\geq 1$ and $n=m+q$. If $\lambda = (m,q)$ and $\mu = (m+1,q-1)$ are partitions of $[n]$, then
 \begin{equation}
  (m-q+1)\left|\eta_\lambda (0) \right| \leq \left|\eta_\mu (0) \right|.\notag 
    \end{equation}
    Furthermore, equality holds if and only if $q=1$ or $m=q=2$.
       \end{lem}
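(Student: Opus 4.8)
The plan is to reduce everything to the derangement numbers $d_n$ by producing a closed-form expression for $(m-q+1)\lvert\eta_{(m,q)}(0)\rvert$, and then comparing it with an expansion of $\lvert\eta_{(m+1,q-1)}(0)\rvert$. First I would record the two-row instance of Corollary \ref{cor_ku-wongformula}: since $\lambda=(m,q)$ has two rows, $\lambda-\w{l}=(m)$ and $\lambda-\w{c}=(m-1,q-1)$, so $\lvert\eta_{(m,q)}(0)\rvert = d_m + q\,\lvert\eta_{(m-1,q-1)}(0)\rvert$, and likewise $\lvert\eta_{(m+1,q-1)}(0)\rvert = d_{m+1} + (q-1)\,\lvert\eta_{(m,q-2)}(0)\rvert$ when $q\geq 2$ (when $q=1$ the partition $\mu=(m+1)$ is a single row and $\lvert\eta_{\mu}(0)\rvert=d_{m+1}$).

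The crux is an exact identity for the left-hand side. Applying Renteln's formula (Theorem \ref{rentelnformula}) to $(m,q)$, whose hook has size $h=m+1$ with $\lambda-\w{h}=(q-1)$ and $\lambda-\w{c}=(m-1,q-1)$, gives
\[ \eta_{(m,q)}(0) = (-1)^{m+1} d_{q-1} - (m+1)\,\eta_{(m-1,q-1)}(0). \]
Feeding in the signs supplied by the ASP for $\mathcal{F}(n,0)$ (Theorem \ref{ASP_0}), namely $\sgn\eta_{(m,q)}(0)=(-1)^{q}$ and $\sgn\eta_{(m-1,q-1)}(0)=(-1)^{q-1}$, this becomes a relation purely between $\lvert\eta_{(m,q)}(0)\rvert$ and $\lvert\eta_{(m-1,q-1)}(0)\rvert$; subtracting it from the Corollary \ref{cor_ku-wongformula} relation above cancels $\lvert\eta_{(m-1,q-1)}(0)\rvert$ and yields $(m-q+1)\lvert\eta_{(m-1,q-1)}(0)\rvert = d_m + (-1)^{m+q} d_{q-1}$. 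Shifting $(m,q)\mapsto(m+1,q+1)$ then produces the key identity
\[ (m-q+1)\,\lvert\eta_{(m,q)}(0)\rvert = d_{m+1} + (-1)^{m+q} d_q. \]

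With this identity in hand the target inequality $(m-q+1)\lvert\eta_{(m,q)}(0)\rvert\leq\lvert\eta_{(m+1,q-1)}(0)\rvert$ turns into $d_{m+1}+(-1)^{m+q}d_q \leq \lvert\eta_{(m+1,q-1)}(0)\rvert$. For $q=1$ both sides equal $d_{m+1}$ (using $d_1=0$), giving the claimed equality. For $q\geq 2$ I substitute the expansion $\lvert\eta_{(m+1,q-1)}(0)\rvert = d_{m+1}+(q-1)\lvert\eta_{(m,q-2)}(0)\rvert$ and cancel $d_{m+1}$, reducing everything to
\[ (-1)^{m+q} d_q \leq (q-1)\,\lvert\eta_{(m,q-2)}(0)\rvert. \]
I would settle this by parity. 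If $m+q$ is odd the left side is $\leq 0$ while the right side is positive (so the inequality is strict, since $\lvert\eta_{(m,q-2)}(0)\rvert>0$). If $m+q$ is even it reduces to $d_q \leq (q-1)\lvert\eta_{(m,q-2)}(0)\rvert$, which for $q=2$ is just $1\leq d_m$ (with equality exactly when $m=2$), and for $q\geq 3$ follows from $\lvert\eta_{(m,q-2)}(0)\rvert\geq d_m \geq d_q$ combined with $q-1\geq 2$. The monotonicity $d_m\geq d_q$ for $m\geq q\geq 2$ and the positivity of the $d_n$ come directly from the recurrence (\ref{equation_d}) and the inequalities (\ref{equation_d1})--(\ref{equation_d2}). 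Tracking which cases are strict pins the equality set down to $q=1$ or $m=q=2$, as required.

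I expect the main obstacle to be the sign bookkeeping in the identity derivation: one must apply the ASP correctly across the several degenerate base partitions (for instance $(m-1,q-1)$ collapsing to a single row when $q=1$, or the vanishing $d_1=0$) so that the identity $(m-q+1)\lvert\eta_{(m,q)}(0)\rvert = d_{m+1}+(-1)^{m+q}d_q$ holds uniformly in $m\geq q\geq 1$. Once that identity is secured, the remaining work is only the short parity-and-small-case verification described above.
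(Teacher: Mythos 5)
Your argument is correct, and it takes a genuinely different route from the paper. The paper proves the lemma by induction on $q$: the base case $q=1$ is computed directly, and for $q\geq 2$ it expands both $\vert\eta_{(m,q)}(0)\vert$ and $\vert\eta_{(m+1,q-1)}(0)\vert$ via Renteln's formula plus the ASP, invokes the induction hypothesis $(m-q+1)\vert\eta_{(m-1,q-1)}(0)\vert\leq\vert\eta_{(m,q-2)}(0)\vert$, and then disposes of the residual terms through a parity-and-cases analysis ($q=2$ versus $q\geq 3$, $m\equiv q$ versus $m\not\equiv q \pmod 2$). You instead extract the exact closed form
\begin{equation}
(m-q+1)\,\vert\eta_{(m,q)}(0)\vert \;=\; d_{m+1}+(-1)^{m+q}d_q,\notag
\end{equation}
obtained by equating the two expressions for $\vert\eta_{(m,q)}(0)\vert$ coming from Corollary \ref{cor_ku-wongformula} and from Renteln's formula combined with Theorem \ref{ASP_0} (I checked the identity against $(1,1)$, $(2,1)$, $(2,2)$, $(3,2)$, $(3,3)$, $(4,2)$; it holds, and the degenerate cases where $d_1=0$ or a row vanishes cause no trouble because the offending terms are zero). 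This identity eliminates induction entirely and reduces the lemma to $(-1)^{m+q}d_q\leq(q-1)\vert\eta_{(m,q-2)}(0)\vert$, which is settled by parity, positivity of the $d_n$, and the monotonicity $d_m\geq d_q$ from (\ref{equation_d2}); the equality analysis ($q=1$, or $q=2$ with $d_m=1$, i.e.\ $m=2$) drops out transparently. Both proofs use the same three ingredients, but yours buys an explicit formula for the left-hand side and a shorter case analysis, at the cost of nothing. One cosmetic slip: in deriving the identity you say the subtraction ``cancels $\vert\eta_{(m-1,q-1)}(0)\vert$''---it is $\vert\eta_{(m,q)}(0)\vert$ that cancels, leaving an equation that you then solve for $\vert\eta_{(m-1,q-1)}(0)\vert$; the displayed identity itself is correct.
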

\begin{proof} We shall prove by induction on $q$. Suppose $q=1$. By Corollary \ref{cor_ku-wongformula},  $\vert\eta_\lambda (0)\vert   =d_m+d_{m-1}$.
By equation (\ref{equation_d}), $\left|\eta_\mu (0) \right| =d_{m+1}=(-1)^{m+1}+(m+1)d_m$. Therefore
\begin{align*}
\left|\eta_\mu (0) \right|-m\left|\eta_\lambda (0) \right|  & =(-1)^{m+1}+d_m-md_{m-1}\\
& =(-1)^{m+1}+(-1)^m+md_{m-1}-md_{m-1}=0.
\end{align*}

 Suppose $q\geq 2$. Assume that the lemma holds for $q-1$. By Theorem \ref{rentelnformula},
 \begin{equation}
 \eta_\lambda (0) =(-1)^{m+1}d_{q-1}-(m+1)\eta_{(m-1,q-1)} (0).\notag
 \end{equation}
 By Theorem \ref{ASP_0}, $\sgn(\eta_{\lambda}(0)) =(-1)^q$ and $\sgn((m-1,q-1)) =(-1)^{q-1}$. Thus,
 \begin{equation}
 \vert\eta_\lambda (0)\vert =(-1)^{m-q+1}d_{q-1}+(m+1)\vert \eta_{(m-1,q-1)} (0)\vert.\notag
 \end{equation}
 Similarly, by  Theorem \ref{rentelnformula} and \ref{ASP_0},
 \begin{equation}
 \vert\eta_\mu (0)\vert =(-1)^{m-q+1}d_{q-2}+(m+2)\vert \eta_{(m,q-2)} (0)\vert.\notag
 \end{equation}
By induction,  $(m-q+1)\vert \eta_{(m-1,q-1)} (0)\vert \leq \vert \eta_{(m,q-2)} (0)\vert$.

 Therefore 
 \begin{align*}
&  \vert\eta_\mu (0)\vert-(m-q+1)\vert\eta_\lambda (0)\vert\\
&\hskip 0.5cm \geq (-1)^{m-q}(m-q+1)d_{q-1}+(-1)^{m-q+1}d_{q-2}+\vert \eta_{(m,q-2)} (0)\vert.
 \end{align*}
 If $q=2$, then $d_{q-1}=0$ and $\vert\eta_\mu (0)\vert-(m-q+1)\vert\eta_\lambda (0)\vert\geq d_m+(-1)^{m-1}\geq 0$. Furthermore, equality holds throughout if and only if $m=q=2$.

 Suppose $q\geq 3$. By Corollary \ref{cor_ku-wongformula}, $\vert \eta_{(m,q-2)} (0)\vert=d_m+(q-2)\vert \eta_{(m-1,q-3)} (0)\vert>d_m$, where the last inequality follows from 
$\vert \eta_{(m-1,q-3)} (0)\vert\neq 0$.  If $m\equiv q\mod 2$, then $ \vert\eta_\mu (0)\vert-(m-q+1)\vert\eta_\lambda (0)\vert>(m-q)d_{q-1}+(d_{q-1}-d_{q-2})+d_m>0$.
If $m\not\equiv q\mod 2$, then 
\begin{align*}
\vert\eta_\mu (0)\vert-(m-q+1)\vert\eta_\lambda (0)\vert&>-(m-q+1)d_{q-1}+d_{q-2}+d_m\\
&\geq d_m-(m-q+1)d_{q-1}\\
&\geq (m-1)d_{m-1}-(m-q+1)d_{q-1}\qquad\textnormal{(by equation (\ref{equation_d2}))}\\
&\geq (q-2)d_{q-1}>0.
\end{align*}
 
 This complete the proof of the lemma.
 \end{proof}

\begin{lem}\label{lm_first} If  $m>q\geq 1$ and $k\geq t\geq 1$, then
\begin{equation}
(m-q+k+1)\vert\eta_{(q,q^t)}(0)\vert\leq k\vert\eta_{(m+1,q^t)}(0)\vert.\notag
\end{equation}
Furthermore, equality holds if and only if $q=1$, $m=2$ and $k=t$.
\end{lem}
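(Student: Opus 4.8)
The plan is to reduce the whole statement to the single extremal case $k=t$ and then prove that case by an induction built on Corollary~\ref{cor_ku-wongformula}. Write $A=\vert\eta_{(q,q^t)}(0)\vert$ and $B=\vert\eta_{(m+1,q^t)}(0)\vert$, so that the claim is $k(B-A)\ge (m-q+1)A$. The left-hand side is increasing in $k$ while the right-hand side does not involve $k$, so the hardest instance is the smallest admissible value $k=t$. Moreover, once the inequality $(m-q+t+1)A\le tB$ is proved, dividing shows $A\le B$ (using $m-q+1\ge 1$ and $t\ge 1$), and then for every $k>t$ one obtains the claim by adding the nonnegative quantity $(k-t)(B-A)$ to the case $k=t$. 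Hence it suffices to prove
\[(m-q+t+1)\,\vert\eta_{(q,q^t)}(0)\vert\le t\,\vert\eta_{(m+1,q^t)}(0)\vert\qquad(m>q\ge 1,\ t\ge 1),\]
together with the fact that equality here forces $q=1$ and $m=2$; the full equality statement then follows because the two summands above are separately nonnegative.

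Since $(q,q^t)=(q^{t+1})$, set $G(m,q,t)=t\vert\eta_{(m+1,q^t)}(0)\vert-(m-q+t+1)\vert\eta_{(q^{t+1})}(0)\vert$. Removing the last row (of length $q$) from each of $(q^{t+1})$ and $(m+1,q^t)$ via Corollary~\ref{cor_ku-wongformula} and regrouping the two first-column children, I expect the exact identity
\[G(m,q,t)=q\,G(m-1,q-1,t)+G(m,q,t-1)+\Big(\vert\eta_{(m+1,q^{t-1})}(0)\vert-\vert\eta_{(q,q^{t-1})}(0)\vert\Big)\]
for $q\ge 2$ and $t\ge 2$. The key structural feature is that the coefficient $m-q+t+1$ is invariant under $(m,q)\mapsto(m-1,q-1)$ and decreases by exactly $1$ under $t\mapsto t-1$, which is what lets the first-column terms assemble into $q\,G(m-1,q-1,t)$ and the deleted-row terms into $G(m,q,t-1)$ plus a single leftover difference. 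That leftover is the monotonicity inequality $\vert\eta_{(m+1,q^{t-1})}(0)\vert\ge\vert\eta_{(q,q^{t-1})}(0)\vert$, which is exactly the $k=t$ statement at parameter $t-1$ and is already available in the induction. All three summands are therefore nonnegative, giving $G(m,q,t)\ge 0$, and in fact $G(m,q,t-1)>0$ whenever $q\ge 2$, so the inequality is strict off the boundary $q=1$.

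The two boundaries serve as base cases. For $q=1$ I would compute directly from Corollary~\ref{cor_ku-wongformula} and (\ref{equation_d}) that $\vert\eta_{(1^{t+1})}(0)\vert=t$ and $\vert\eta_{(m+1,1^t)}(0)\vert=d_{m+1}+t\,d_m$, reducing the claim to $d_{m+1}+t\,d_m\ge m+t$, which holds because $d_{m+1}\ge m$ and $d_m\ge 1$ for $m\ge 2$, with equality only at $m=2$. For $t=1$ I would induct on $q$: the sub-base $q=1$ is the previous computation, and for $q\ge 2$ the same last-row peeling yields $G(m,q,1)=\big(d_{m+1}-(m-q+2)d_q\big)+q\,G(m-1,q-1,1)$, where the first bracket is a positive derangement-number inequality derivable from (\ref{equation_d2}) and the second is nonnegative by induction. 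Tracing equality through the interior recursion and the two bases shows that $G=0$ only at $q=1,\ m=2$, and combined with the reduction in $k$ (where the extra term $(k-t)(B-A)$ is strictly positive once $q=1,\ m=2$) this gives equality in the lemma precisely when $q=1$, $m=2$, and $k=t$.

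The main obstacle will be verifying the regrouping identity for $G$ exactly: one must confirm that the first-column children $(m,(q-1)^t)$ and $((q-1)^{t+1})$ combine with the correct coefficient $q$ into $G(m-1,q-1,t)$, and that the deleted-row children leave behind only the controllable monotonicity difference rather than an unmanageable error term. Once this identity is pinned down, the remaining ingredients are a couple of short estimates on the derangement numbers $d_n$ and routine induction bookkeeping.
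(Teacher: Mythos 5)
Your proposal is correct: I checked the regrouping identity you flagged as the main risk, and it does hold exactly. Two applications of Corollary~\ref{cor_ku-wongformula} (to $(m+1,q^t)$ and to $(q^{t+1})$, removing the last row of length $q$ from each) give precisely
\begin{equation}
G(m,q,t)=q\,G(m-1,q-1,t)+G(m,q,t-1)+\bigl(\vert\eta_{(m+1,q^{t-1})}(0)\vert-\vert\eta_{(q^{t})}(0)\vert\bigr)\qquad(q\ge 2,\ t\ge 2),\notag
\end{equation}
because the coefficient $m-q+t+1$ is indeed invariant under $(m,q)\mapsto(m-1,q-1)$ and drops by $1$ under $t\mapsto t-1$; your $t=1$ and $q=1$ base cases and the derangement estimates also check out (e.g.\ $d_{m+1}>(m-q+2)d_q$ for $m>q\ge 2$ follows from iterating equation~(\ref{equation_d2})). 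Your route differs from the paper's in organization rather than in substance: the paper also inducts on $q$ via Corollary~\ref{cor_ku-wongformula}, but it carries the general parameter $k$ through the entire induction and telescopes all $t$ last-row removals at once, writing $\vert\eta_{(m+1,q^t)}(0)\vert=q\sum_{j=1}^{t}\vert\eta_{(m,(q-1)^j)}(0)\vert+d_{m+1}$ and applying the inductive hypothesis at $q-1$ to every summand, then comparing $d_{m+1}$ with $d_q$ via $d_{m+1}\ge m(m-1)d_q$. Your two genuine contributions are the up-front reduction to the extremal case $k=t$ (exploiting linearity in $k$, which the paper does not do and which shortens the bookkeeping for the equality analysis) and the replacement of the full telescoping by a one-step recursion with an inner induction on $t$; the price is the extra leftover term $\vert\eta_{(m+1,q^{t-1})}(0)\vert-\vert\eta_{(q^{t})}(0)\vert$, which you correctly recycle from the $t-1$ case. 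Both proofs ultimately rest on the same key lemma and the same derangement-number inequalities, so neither buys more generality, but yours isolates the $k$-dependence more cleanly while the paper's avoids the double induction.
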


\begin{proof} We shall prove by induction on $q$. Suppose $q=1$. Then by Corollary \ref{cor_ku-wongformula},
$\vert\eta_{(q,q^t)}(0)\vert=t$ and $\vert\eta_{(m+1,q^t)}(0)\vert=td_m+d_{m+1}$. Note that $m\geq 2$. If $m=2$, then 
\begin{align*}
k\vert\eta_{(m+1,q^t)}(0)\vert-(m-q+k+1)\vert\eta_{(q,q^t)}(0)\vert
&=k(t+2)-(k+2)t\\
&=2(k-t)\geq 0.
\end{align*}
Furthermore, equality holds if and only if $k=t$.

If $m= 3$, then 
\begin{align*}
k\vert\eta_{(m+1,q^t)}(0)\vert-(m-q+k+1)\vert\eta_{(q,q^t)}(0)\vert
&=k(2t+9)-(k+3)t\\
&=kt+3(3k-t)> 0.
\end{align*}
Suppose $m\geq 4$. By equation (\ref{equation_d2}), $d_m\geq (m-1)(m-2)d_{m-2}\geq (m-1)(m-2)$. Since
\begin{align}
k(m-1)(m-2)-(m-q+k+1) &=km^2-(3k+1)m+k\notag\\
&\geq 4km-(3k+1)m+k\notag\\
& =(k-1)m+k>0\notag,
\end{align}
we have $k\vert\eta_{(m+1,q^t)}(0)\vert-(m-q+k+1)\vert\eta_{(q,q^t)}(0)\vert\geq t((k-1)m+k)+d_{m+1}>0$.

Suppose $q\geq 2$. Assume that 
\begin{equation}
(m'-(q-1)+k+1)\vert\eta_{(q-1,(q-1)^t)}(0)\vert\leq k\vert\eta_{(m'+1,(q-1)^t)}(0)\vert,\notag
\end{equation}
for all $m'> q-1$ and $k\geq t\geq 1$.

By Corollary \ref{cor_ku-wongformula},
\begin{align*}
\vert\eta_{(m+1,q^t)}(0)\vert &= q\vert\eta_{(m,(q-1)^t)}(0)\vert+\vert\eta_{(m+1,q^{t-1})}(0)\vert\\
&= q\vert\eta_{(m,(q-1)^t)}(0)\vert+q\vert\eta_{(m,(q-1)^{t-1})}(0)\vert+\vert\eta_{(m+1,q^{t-2})}(0)\vert\\
&\hskip 1cm \vdots\\
&= q\left(\sum_{j=1}^t \vert\eta_{(m,(q-1)^j)}(0)\vert\right)+d_{m+1}.
\end{align*}
Similarly,
\begin{align*}
\vert\eta_{(q,q^t)}(0)\vert &= q\left(\sum_{j=1}^t \vert\eta_{(q-1,(q-1)^j)}(0)\vert\right)+d_{q}.
\end{align*}
By induction, for $1\leq j\leq t$,
\begin{equation}
(m-q+k+1)\vert\eta_{(q-1,(q-1)^j)}(0)\vert\leq k\vert\eta_{(m,(q-1)^j)}(0)\vert.\notag
\end{equation}
By equation (\ref{equation_d2}), $d_{m+1}\geq m(m-1)d_{m-1}\geq m(m-1)d_q$. Note that $m\geq 3$ and
\begin{align*}
km(m-1)-(m-q+k+1) & =km^2-(k+1)m+q-k-1\\
& \geq 3km-(k+1)m+q-k-1\\
& = (2k-1)m+q-k-1\\
& \geq  3(2k-1)+q-k-1\\
& =  5k-4+q>0\\
\end{align*}
Hence, $(m-q+k+1)\vert\eta_{(q,q^t)}(0)\vert\leq k\vert\eta_{(m+1,q^t)}(0)\vert$.

This complete the proof of the lemma.
\end{proof}

\begin{lem}\label{lm_second} If $q\geq 1$ and $t\geq 1$, then
\begin{equation}
\vert\eta_{(q^t,q-1)}(0)\vert< \vert\eta_{(q^t,q)}(0)\vert.\notag
\end{equation}
\end{lem}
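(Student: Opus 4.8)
The plan is to prove Lemma~\ref{lm_second} by induction on $q$, following the same strategy used in Lemmas~\ref{lm_first} and \ref{case_k=1}: reduce both sides via Corollary~\ref{cor_ku-wongformula} (which peels off the last row), then compare term by term. First I would dispose of the base case $q=1$. Here $(q^t,q-1)=(1^t,0)$ is not a genuine partition, so the statement should be read as comparing $(1^{t})$ against $(1^{t+1})$, or more likely the intended base case is $q=2$, where $(2^t,1)$ and $(2^t,2)=(2^{t+1})$ are both legitimate. For small $q$ I would compute $|\eta_{(q^t,q-1)}(0)|$ and $|\eta_{(q^t,q)}(0)|$ directly using Corollary~\ref{cor_ku-wongformula} together with the values $d_m$ from equation~(\ref{equation_d}), and check the strict inequality.

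For the inductive step, the key tool is Corollary~\ref{cor_ku-wongformula}, $\vert\eta_\lambda(0)\vert=\vert\eta_{\lambda-\w l}(0)\vert+\lambda_r\vert\eta_{\lambda-\w c}(0)\vert$. Applying this to $\lambda=(q^t,q-1)$, removing the last row $\w l$ of size $q-1$ gives $(q^t)$, and removing the first column $\w c$ gives $((q-1)^t,q-2)$; hence
\[
\vert\eta_{(q^t,q-1)}(0)\vert=\vert\eta_{(q^t)}(0)\vert+(q-1)\vert\eta_{((q-1)^t,q-2)}(0)\vert.
\]
Applying it to $\lambda=(q^t,q)=(q^{t+1})$, removing the last row of size $q$ gives $(q^t)$ and removing the first column gives $((q-1)^{t+1})$, so
\[
\vert\eta_{(q^{t+1})}(0)\vert=\vert\eta_{(q^t)}(0)\vert+q\vert\eta_{((q-1)^{t+1})}(0)\vert.
\]
Subtracting, the common term $\vert\eta_{(q^t)}(0)\vert$ cancels and it suffices to show
\[
(q-1)\vert\eta_{((q-1)^t,q-2)}(0)\vert<q\vert\eta_{((q-1)^{t+1})}(0)\vert.
\]
The right-hand side is $q\vert\eta_{((q-1)^{t+1})}(0)\vert\geq(q-1)\vert\eta_{((q-1)^{t+1})}(0)\vert$, and by the induction hypothesis applied at parameter $q-1$ (with $t+1$ rows of size $q-1$ on the larger side) one has $\vert\eta_{((q-1)^t,q-2)}(0)\vert<\vert\eta_{((q-1)^{t+1})}(0)\vert$, which closes the induction after accounting for the extra factor $q/(q-1)>1$.

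The main obstacle I anticipate is matching the row counts correctly so that the induction hypothesis applies in the right shape: the partition $((q-1)^{t+1})$ has one more part of size $q-1$ than appears on the smaller side $((q-1)^t,q-2)$, so I must verify that the inductive statement for $q-1$ is being invoked with a consistent number of rows, and that the strictness is preserved rather than merely weak inequality. A secondary subtlety is the degenerate cases where $q-2=0$ (i.e.\ $q=2$), where $((q-1)^t,q-2)$ collapses to $(1^t)$ and the recurrence must be read carefully; I would handle $q=2$ as a separate explicit base computation rather than forcing it through the general inductive step. Once the bookkeeping of parts is pinned down, the inequality itself follows from the monotone, all-positive structure of $\vert\eta_\lambda(0)\vert$ guaranteed by Corollary~\ref{cor_ku-wongformula}.
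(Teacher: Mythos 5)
Your proposal is correct and matches the paper's proof essentially verbatim: the paper also inducts on $q$, applies Corollary~\ref{cor_ku-wongformula} to both partitions so that the common term $\vert\eta_{(q^t)}(0)\vert$ drops out, and invokes the induction hypothesis $\vert\eta_{((q-1)^t,q-2)}(0)\vert<\vert\eta_{((q-1)^t,q-1)}(0)\vert$ together with the coefficient comparison $q-1\le q$. The paper takes $q=1$ as the base case by reading $(1^t,0)$ as $(1^t)$ (giving $t-1<t$), which is exactly your first suggested reading, so your bookkeeping worries resolve just as you anticipated.
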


\begin{proof} We shall prove by induction on $q$. Suppose $q=1$. Then by Corollary \ref{cor_ku-wongformula},
$\vert\eta_{(q^t,q)}(0)\vert=t>t-1=\vert\eta_{(q^t,q-1)}(0)\vert$. 

Suppose $q\geq 2$. Assume that  the lemma holds for $q-1$.
By Corollary \ref{cor_ku-wongformula},
\begin{align*}
\vert\eta_{(q^t,q)}(0)\vert&= q\vert\eta_{((q-1)^t,q-1)}(0)\vert+\vert\eta_{(q^{t})}(0)\vert\\
\\
\vert\eta_{(q^t,q-1)}(0)\vert &= (q-1)\vert\eta_{((q-1)^t,q-2)}(0)\vert+\vert\eta_{(q^{t})}(0)\vert.
\end{align*}
By induction, $\vert\eta_{((q-1)^t,q-2)}(0)\vert<\vert\eta_{((q-1)^t,q-1)}(0)\vert$.
Hence, $\vert\eta_{(q^t,q-1)}(0)\vert< \vert\eta_{(q^t,q)}(0)\vert$.

This complete the proof of the lemma.
\end{proof}

\begin{lem}\label{lm_case_k>2} Let $m\geq q\geq 1$, $k\geq 2$ and $n=m+kq$. If $\lambda=(m,q^{k-1},q)$ and $\mu=(m+1,q^{k-1},q-1)$ are partitions of $[n]$, then
\begin{equation}
(m-q+1)\vert\eta_{\lambda}(0)\vert\leq k\vert\eta_{\mu}(0)\vert.\notag
\end{equation}
Furthermore, equality holds if and only if $q=1=m$.
\end{lem}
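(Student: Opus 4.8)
The plan is to argue by induction on $q$, following the template already used for Lemmas~\ref{case_k=1}, \ref{lm_first} and \ref{lm_second}. First I would note that $\lambda=(m,q^{k-1},q)=(m,q^{k})$ and $\mu=(m+1,q^{k-1},q-1)$ are honest partitions of $n=m+kq$ (the hypothesis $m\geq q$ is exactly what guarantees this), so that the magnitude recurrence of Corollary~\ref{cor_ku-wongformula} applies to both. All magnitudes will ultimately be expressed through the numbers $d_j=|\eta_{(j)}(0)|$, governed by $d_j=(-1)^j+jd_{j-1}$ and by the monotonicity estimates (\ref{equation_d1}) and (\ref{equation_d2}).

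For the base case $q=1$ the coefficient is $m-q+1=m$, and repeated use of Corollary~\ref{cor_ku-wongformula} (each removal of the first column of a hook $(\cdot,1^{j})$ collapses to a single row and contributes $d_{m-1}$) yields the closed forms $|\eta_{(m,1^{k})}(0)|=d_m+k\,d_{m-1}$ and $|\eta_{(m+1,1^{k-1})}(0)|=d_{m+1}+(k-1)d_m$. Substituting $d_{m+1}=(m+1)d_m+(-1)^{m+1}$ and $m\,d_{m-1}=d_m-(-1)^{m}$ collapses the difference to the clean identity
\[ k\,|\eta_{\mu}(0)|-m\,|\eta_{\lambda}(0)| = (k-1)(m+k)\,d_m , \]
which is nonnegative because $k\geq2$ and $d_m\geq0$, and vanishes exactly when $d_m=0$, i.e. $m=1$. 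This already pins down the claimed equality case $q=1=m$.

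For the inductive step ($q\geq2$, so $m\geq2$ and $d_m>0$) I would apply Corollary~\ref{cor_ku-wongformula} once to each side, writing
\[ |\eta_{\lambda}(0)|=|\eta_{(m,q^{k-1})}(0)|+q\,|\eta_{(m-1,(q-1)^{k})}(0)| \]
and $|\eta_{\mu}(0)|=|\eta_{(m+1,q^{k-1})}(0)|+(q-1)\,|\eta_{(m,(q-1)^{k-1},q-2)}(0)|$. The induction hypothesis (the lemma for $q-1$, applied with first part $m-1$ and $k$ rows, whose coefficient is $(m-1)-(q-1)+1=m-q+1$) bounds the column term, $(m-q+1)|\eta_{(m-1,(q-1)^{k})}(0)|\leq k\,|\eta_{(m,(q-1)^{k-1},q-2)}(0)|$; multiplying by $q$ overshoots the corresponding weight $k(q-1)$ on the right by exactly $k\,|\eta_{(m,(q-1)^{k-1},q-2)}(0)|$. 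It then suffices to establish the residual ``row'' inequality $k\,|\eta_{(m+1,q^{k-1})}(0)|\geq(m-q+1)|\eta_{(m,q^{k-1})}(0)|+k\,|\eta_{(m,(q-1)^{k-1},q-2)}(0)|$, which is precisely a first-part/rectangular comparison of the type furnished by Lemma~\ref{lm_first} (a lower bound for $|\eta_{(m+1,q^{k-1})}(0)|$ against the rectangle $|\eta_{(q^{k})}(0)|$) and Lemma~\ref{lm_second} (relating that rectangle to the near-cross term $|\eta_{(q^{k-1},q-1)}(0)|$), with the leftover constants absorbed by (\ref{equation_d2}) and the boundary case $m=q$ treated by hand. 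Since $d_m>0$ throughout this range, every estimate is strict and no new equality cases arise.

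The step I expect to be the main obstacle is precisely this coefficient bookkeeping: the induction hypothesis is ``too strong by the factor $q/(q-1)$'' on the column term, so a naive term-by-term comparison fails and the excess must be routed through the row term. Showing that Lemmas~\ref{lm_first} and \ref{lm_second}, together with the growth estimate (\ref{equation_d2}), combine to dominate the surplus $k\,|\eta_{(m,(q-1)^{k-1},q-2)}(0)|$ uniformly in $k$ and $q$ is the delicate heart of the argument; everything else is routine manipulation of the recurrence and the constants $d_j$.
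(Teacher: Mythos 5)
Your base case is correct and in fact cleaner than the paper's: the identity $k\vert\eta_\mu(0)\vert-m\vert\eta_\lambda(0)\vert=(k-1)(m+k)d_m$ checks out against $d_{m+1}=(m+1)d_m+(-1)^{m+1}$ and $md_{m-1}=d_m-(-1)^m$, and it pins down the equality case $m=1$ in one line where the paper splits into $m=1$, $m=2$, $m\geq 3$. The reduction in your inductive step is also arithmetically sound: applying Corollary \ref{cor_ku-wongformula} to both sides and feeding the column terms into the induction hypothesis does leave exactly the residual inequality $k\vert\eta_{(m+1,q^{k-1})}(0)\vert\geq(m-q+1)\vert\eta_{(m,q^{k-1})}(0)\vert+k\vert\eta_{(m,(q-1)^{k-1},q-2)}(0)\vert$.

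The gap is that this residual inequality is not deliverable by the tools you name, and it is essentially as hard as the lemma itself. Lemma \ref{lm_first} only lower-bounds $k\vert\eta_{(m+1,q^{k-1})}(0)\vert$ by $(m-q+k+1)\vert\eta_{(q^{k})}(0)\vert$, a quantity that is \emph{linear in $m$ with a fixed rectangle factor}, whereas the right-hand side of your residual inequality contains $(m-q+1)\vert\eta_{(m,q^{k-1})}(0)\vert$, which grows like $m\cdot d_m$; already for $q=2$, $k=2$ the bound from Lemma \ref{lm_first} is $3(m+1)$ against a required $(m-1)(d_m+\cdots)$, so the proposed absorption fails badly. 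The root cause is the choice of recurrence: you expand by the last row (Corollary \ref{cor_ku-wongformula}), which leaves residual terms $\vert\eta_{(m,q^{k-1})}(0)\vert$ and $\vert\eta_{(m+1,q^{k-1})}(0)\vert$ that still carry the entire first row of length about $m$. The paper instead expands $\lambda$ and $\mu$ by Renteln's hook recurrence (Theorem \ref{rentelnformula}), which produces dominant column terms $(m+k)\vert\eta_{(m-1,(q-1)^{k})}(0)\vert$ and $(m+k+1)\vert\eta_{(m,(q-1)^{k-1},q-2)}(0)\vert$ whose coefficients differ by $1$ in the favourable direction and match the induction hypothesis directly, leaving only the hook-complement remainders $\vert\eta_{((q-1)^{k})}(0)\vert$ and $\vert\eta_{((q-1)^{k-1},q-2)}(0)\vert$ --- rectangles in the smaller parameter $q-1$, independent of $m$ --- which genuinely are small enough to be absorbed by Lemmas \ref{lm_first} and \ref{lm_second} (with $m=q$ handled separately). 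To repair your argument you would either need to switch to the hook recurrence in the inductive step, or supply a separate (and nontrivial) proof of your residual inequality.
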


\begin{proof} We shall prove by induction on $q$. Suppose $q=1$. By Corollary \ref{cor_ku-wongformula},
$\vert\eta_{\lambda}(0)\vert=kd_{m-1}+d_m$ and $\vert\eta_{\mu}(0)\vert=(k-1)d_m+d_{m+1}$. If $m=1$, then $\vert\eta_{\lambda}(0)\vert=k=k\vert\eta_{\mu}(0)\vert$ and the lemma holds. If $m=2$, then $2\vert\eta_{\lambda}(0)\vert=2<k(k+1)=k\vert\eta_{\mu}(0)\vert$ and the lemma holds.
Suppose $m\geq 3$. Then by equation (\ref{equation_d2}),
\begin{align*}
& k\vert\eta_{\mu}(0)\vert-(m-q+1)\vert\eta_{\lambda}(0)\vert \\
&=k((k-1)d_m+d_{m+1})-m\left(  kd_{m-1}+d_m \right)\\
&\geq k((k-1)d_m+md_m)-m\left(  kd_{m-1}+d_m \right)\\
&= (k^2+(k-1)m-k)d_{m}-kmd_{m-1}\\
&\geq d_{m-1} \left( (k^2+(k-1)m-k)(m-1)-km\right)\\
&\geq d_{m-1} \left( 2(k^2+(k-1)m-k)-km\right)\\
&=d_{m-1} \left( 2k(k-1)+(k-2)m\right)>0.\\
\end{align*}

Suppose $q\geq 2$. Assume that the lemma holds for $q-1$.
By Theorem \ref{rentelnformula},
\begin{equation}
\eta_{\lambda}(0)=(-1)^k (m+k)\eta_{(m-1,(q-1)^{k-1},q-1)}(0)+(-1)^{m+k}\eta_{((q-1)^{k-1},q-1)}(0).\notag
\end{equation}
By Theorem \ref{ASP_0}, $\sgn(\eta_{\lambda}(0))=(-1)^{kq}$, $\sgn(\eta_{(m-1,(q-1)^{k-1},q-1)}(0))=(-1)^{k(q-1)}$ and \newline
 $\sgn(\eta_{((q-1)^{k-1},q-1)}(0))=(-1)^{(k-1)(q-1)}$. Therefore,
 \begin{equation}
 \vert\eta_{\lambda}(0)\vert=(m+k)\vert\eta_{(m-1,(q-1)^{k-1},q-1)}(0)\vert+(-1)^{m-q+1}\vert\eta_{((q-1)^{k-1},q-1)}(0)\vert.\notag
 \end{equation}
Similarly, 
 \begin{equation}
 \vert\eta_{\mu}(0)\vert=(m+k+1)\vert\eta_{(m,(q-1)^{k-1},q-2)}(0)\vert+(-1)^{m-q}\vert\eta_{((q-1)^{k-1},q-2)}(0)\vert.\notag
 \end{equation}
 By induction,
 \begin{equation}
 (m-q+1)\vert\eta_{(m-1,(q-1)^{k-1},q-1)}(0)\vert\leq k\vert\eta_{(m,(q-1)^{k-1},q-2)}(0)\vert.\notag
 \end{equation}
 
 Suppose $m=q$. Then 
 \begin{align*}
(m-q+1) \vert\eta_{\lambda}(0)\vert & =(m-q+1)\left((m+k)\vert\eta_{(m-1,(q-1)^{k-1},q-1)}(0)\vert-\vert\eta_{((q-1)^{k-1},q-1)}(0)\vert\right)\\
&<(m+k)\left((m-q+1)\vert\eta_{(m-1,(q-1)^{k-1},q-1)}(0)\vert\right)\\
& <(m+k+1)\left(k\vert\eta_{(m,(q-1)^{k-1},q-2)}(0)\vert\right)\\
&\leq k\left((m+k+1)\vert\eta_{(m,(q-1)^{k-1},q-2)}(0)\vert+\vert\eta_{((q-1)^{k-1},q-2)}(0)\vert\right)\\
&=k\vert\eta_{\mu}(0)\vert.
 \end{align*}

Suppose $m>q$. Note that
\begin{align*}
(m-q+1)\vert\eta_{\lambda}(0)\vert &\leq (m-q+1)\left((m+k)\vert\eta_{(m-1,(q-1)^{k-1},q-1)}(0)\vert+\vert\eta_{((q-1)^{k-1},q-1)}(0)\vert\right)\\
 &\leq (m+k)\left(k\vert\eta_{(m,(q-1)^{k-1},q-2)}(0)\vert\right)+(m-q+1)\vert\eta_{((q-1)^{k-1},q-1)}(0)\vert.
\end{align*}
By Lemma \ref{lm_second},
 \begin{align*}
 k\vert\eta_{\mu}(0)\vert &\geq k\left((m+k+1)\vert\eta_{(m,(q-1)^{k-1},q-2)}(0)\vert-\vert\eta_{((q-1)^{k-1},q-2)}(0)\vert\right)\\
 &> k\left((m+k+1)\vert\eta_{(m,(q-1)^{k-1},q-2)}(0)\vert-\vert\eta_{((q-1)^{k-1},q-1)}(0)\vert\right). 
 \end{align*}
Therefore,
\begin{align*}
&k\vert\eta_{\mu}(0)\vert-(m-q+1)\vert\eta_{\lambda}(0)\vert\\
&\geq k\vert\eta_{(m,(q-1)^{k-1},q-2)}(0)\vert-(m-q+k+1)\vert\eta_{((q-1)^{k-1},q-1)}(0)\vert.
\end{align*}
If $q=2$, then by Lemma \ref{lm_first}, $k\vert\eta_{\mu}(0)\vert-(m-q+k+1)\vert\eta_{\lambda}(0)\vert\geq 0$. Suppose $q\geq 3$. By Corollary \ref{cor_ku-wongformula},
\begin{align*}
\vert\eta_{(m,(q-1)^{k-1},q-2)}(0)\vert & =(q-2)\vert\eta_{(m-1,(q-2)^{k-1},q-3)}(0)\vert+\vert\eta_{(m,(q-1)^{k-1})}(0)\vert\\
&>\vert\eta_{(m,(q-1)^{k-1})}(0)\vert.
\end{align*}
It then follows from Lemma \ref{lm_first} that
\begin{align*}
&k\vert\eta_{\mu}(0)\vert-(m-q+1)\vert\eta_{\lambda}(0)\vert\\
&> k\vert\eta_{(m,(q-1)^{k-1})}(0)\vert-(m-q+k+1)\vert\eta_{((q-1)^{k-1},q-1)}(0)\vert>0.
\end{align*}

This complete the proof of the lemma.
\end{proof}

\begin{lem} \label{lemma_special2}    Let $r\geq 0$, $m\geq q\geq 1$, $k\geq 1$, $n=m+kq+\sum_{j=1}^r \alpha_j$, $q>\alpha_1$ and
\begin{align*}
\lambda & = (m,q^{k-1},q,\alpha_1,\dots, \alpha_r),\\
\mu & = (m+1,q^{k-1},q-1,\alpha_1,\dots, \alpha_r),
\end{align*}
be partitions of $[n]$. Then
 \begin{equation}
  (m-q+1)\left|\eta_\lambda (0) \right| \leq k\left|\eta_\mu (0) \right|.\notag 
 \end{equation}
      \end{lem}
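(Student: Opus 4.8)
The plan is to prove the inequality by induction on $q+r$, the inductive step consisting of peeling off the last row of $\lambda$ and of $\mu$ via the absolute-value recurrence of Corollary~\ref{cor_ku-wongformula}. Writing $\lambda=(m,q^{k-1},q,\alpha_1,\dots,\alpha_r)=(m,q^{k},\alpha_1,\dots,\alpha_r)$ and $\mu=(m+1,q^{k-1},q-1,\alpha_1,\dots,\alpha_r)$, both partitions have last part $\alpha_r$, because $\alpha_r\leq\alpha_1\leq q-1$ lies at or below every preceding part of each. The case $r=0$ is precisely Lemma~\ref{lm_case_k>2}, and this serves as the base of the induction; note also that when $r\geq 1$ the hypothesis $q>\alpha_1\geq 1$ forces $q\geq 2$.

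For the inductive step suppose $r\geq 1$. Corollary~\ref{cor_ku-wongformula}, applied to each of $\lambda$ and $\mu$ (both of which have $1+k+r\geq 3$ rows and common last part $\alpha_r$), gives
\begin{align}
\vert\eta_\lambda(0)\vert &= \vert\eta_{\lambda-\w l}(0)\vert+\alpha_r\,\vert\eta_{\lambda-\w c}(0)\vert, \notag\\
\vert\eta_\mu(0)\vert &= \vert\eta_{\mu-\w l}(0)\vert+\alpha_r\,\vert\eta_{\mu-\w c}(0)\vert. \notag
\end{align}
I would then treat the two pairs of deleted partitions separately and recombine. For the row deletion, $\lambda-\w l=(m,q^{k},\alpha_1,\dots,\alpha_{r-1})$ and $\mu-\w l=(m+1,q^{k-1},q-1,\alpha_1,\dots,\alpha_{r-1})$ are of the exact shape in the lemma with the same $m,q,k$ and only $r-1$ trailing parts; since $q+(r-1)<q+r$, the induction hypothesis gives $(m-q+1)\vert\eta_{\lambda-\w l}(0)\vert\leq k\vert\eta_{\mu-\w l}(0)\vert$.

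For the column deletion, subtracting $1$ from every part (legitimate since $q\geq 2$) yields $\lambda-\w c=(m-1,(q-1)^{k},\beta_1,\dots,\beta_{r'})$ and $\mu-\w c=(m,(q-1)^{k-1},q-2,\beta_1,\dots,\beta_{r'})$, where $\beta_j=\alpha_j-1$ and $r'\leq r$ is the number of $\alpha_j$ strictly greater than $1$ (parts equal to $1$ vanish). These are again of the form in the lemma, with parameters $m-1\geq q-1\geq 1$, the same $k$, and $r'$ trailing parts satisfying $q-1>\beta_1$; as $(q-1)+r'<q+r$, the induction hypothesis applies and, since $(m-1)-(q-1)+1=m-q+1$, gives $(m-q+1)\vert\eta_{\lambda-\w c}(0)\vert\leq k\vert\eta_{\mu-\w c}(0)\vert$. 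Multiplying the first displayed identity by $(m-q+1)$ and inserting the two bounds term by term produces $(m-q+1)\vert\eta_\lambda(0)\vert\leq k\vert\eta_\mu(0)\vert$.

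The step demanding the most care is the identification of $\lambda-\w c$ and $\mu-\w c$ with valid instances of the lemma: one must confirm that after removing the first column the shapes are still exactly $(\,\cdot\,,(q-1)^{k},\dots)$ and $(\,\cdot\,,(q-1)^{k-1},q-2,\dots)$ and that the hypotheses $m-1\geq q-1$ and $q-1>\beta_1$ survive. The genuinely delicate boundary is $q=2$: there $q-2=0$ and, because $q>\alpha_1$ then forces every $\alpha_j=1$, one has $r'=0$, so both column-deleted partitions collapse to the $q-1=1$, $r'=0$ instance $\lambda-\w c=(m-1,1^{k})$, $\mu-\w c=(m,1^{k-1})$, which is covered by Lemma~\ref{lm_case_k>2}. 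Checking these degeneracies is the only real obstacle; the arithmetic recombination at the end is immediate.
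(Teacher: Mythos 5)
Your argument is correct in substance and draws on the same toolkit as the paper --- Corollary~\ref{cor_ku-wongformula} to peel rows, with Lemmas~\ref{case_k=1} and \ref{lm_case_k>2} as the ultimate input --- but it organizes the induction differently. The paper fixes $r\geq 1$ and inducts on $\alpha_1$: it unrolls the last-row recurrence all the way down to the head $(m,q^{k-1},q)$, writing $\vert\eta_\lambda(0)\vert=\vert\eta_{(m,q^{k-1},q)}(0)\vert+\sum_{j=1}^r\alpha_j\vert\eta_{(m-1,(q-1)^{k-1},q-1,\alpha_1-1,\dots,\alpha_j-1)}(0)\vert$, handles the head by Lemma~\ref{case_k=1} or \ref{lm_case_k>2}, and feeds the $r$ column-deleted tails to the induction hypothesis at $\alpha_1-1$ (with a separate, fully unrolled treatment of the case $\alpha_1=1$). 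You instead induct on $q+r$ and peel a single row, sending $\lambda-\w{l},\mu-\w{l}$ to the hypothesis at $(q,r-1)$ and $\lambda-\w{c},\mu-\w{c}$ to the hypothesis at $(q-1,r')$; the observation that $(m-1)-(q-1)+1=m-q+1$ keeps the constant intact, and your checks that $q-1>\alpha_1-1$ and that vanishing parts merely shorten the tail are exactly the verifications needed. This is somewhat cleaner bookkeeping: one uniform inductive step instead of the paper's two-stage unrolling. The one flaw is in the base case: you cite Lemma~\ref{lm_case_k>2} for all $r=0$ instances, but that lemma assumes $k\geq 2$; when $k=1$ (which arises both as an initial base case and at the $q=2$ degeneracy you discuss) you must invoke Lemma~\ref{case_k=1} instead, exactly as the paper does. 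With that citation repaired the proof is complete.
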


\begin{proof} If $r=0$, then the lemma follows from Lemma \ref{case_k=1} or \ref{lm_case_k>2}, depending on whether $k=1$ or $k\geq 2$. Suppose $r\geq 1$. Then $q\geq 2$, for $q>\alpha_1\geq 1$. 
We shall prove by induction on $\alpha_1$. 

Suppose $\alpha_1=1$. Then  $\alpha_1=\cdots=\alpha_r=1$. By Corollary \ref{cor_ku-wongformula},
\begin{align*}
\vert  \eta_\lambda (0) \vert  & = \vert \eta_{(m,q^{k-1},q,\alpha_1,\dots, \alpha_{r-1})} (0) \vert + \vert  \eta_{(m-1,(q-1)^{k-1},q-1)} (0)\vert\\
  & = \vert \eta_{(m,q^{k-1},q,\alpha_1,\dots, \alpha_{r-2})} (0) \vert + 2\vert  \eta_{(m-1,(q-1)^{k-1},q-1)} (0)\vert\\
  &\hskip 1cm\vdots\\
& = \vert \eta_{(m,q^{k-1},q)} (0) \vert + r\vert  \eta_{(m-1,(q-1)^{k-1},q-1)} (0)\vert.
\end{align*}
Similarly, 
\begin{equation}
\vert \eta_\mu (0) \vert= \vert \eta_{(m+1,q^{k-1},q-1)} (0) \vert + r\vert  \eta_{(m,(q-1)^{k-1},q-2)} (0)\vert.\notag
\end{equation}
By  Lemma \ref{case_k=1} or \ref{lm_case_k>2},
\begin{align*}
(m-q+1)\vert \eta_{(m,q^{k-1},q)} (0) \vert & \leq k\vert \eta_{(m+1,q^{k-1},q-1)} (0) \vert,\qquad\textnormal{and}\\
(m-q+1)\vert  \eta_{(m-1,(q-1)^{k-1},q-1)} (0)\vert &\leq k\vert  \eta_{(m,(q-1)^{k-1},q-2)} (0)\vert.
\end{align*}
Hence, $(m-q+1)\left|\eta_\lambda (0) \right| \leq k\left|\eta_\mu (0) \right|$.

Suppose $\alpha_1\geq 2$. Assume that the lemma holds for $\alpha_1-1$.  By Corollary \ref{cor_ku-wongformula},
\begin{align*}
\vert  \eta_\lambda (0) \vert & = \vert \eta_{(m,q^{k-1},q,\alpha_1,\dots, \alpha_{r-1})} (0) \vert + \alpha_r\vert  \eta_{(m-1,(q-1)^{k-1},q-1,\alpha_1-1,\dots,\alpha_r-1)} (0)\vert\\
    &\hskip 1cm\vdots\\
& = \vert \eta_{(m,q^{k-1},q)} (0) \vert + \sum_{j=1}^r \alpha_j\vert  \eta_{(m-1,(q-1)^{k-1},q-1,\alpha_1-1,\dots, \alpha_{j}-1)} (0)\vert.
\end{align*}
Similarly, 
\begin{equation}
\vert \eta_\mu (0) \vert= \vert \eta_{(m+1,q^{k-1},q-1)} (0) \vert + \sum_{j=1}^r \alpha_j\vert  \eta_{(m,(q-1)^{k-1},q-2,\alpha_1-1,\dots, \alpha_{j}-1)} (0)\vert.\notag
\end{equation}
By Lemma \ref{case_k=1} or \ref{lm_case_k>2},
\begin{align*}
(m-q+1)\vert \eta_{(m,q^{k-1},q)} (0) \vert & \leq k\vert \eta_{(m+1,q^{k-1},q-1)} (0) \vert.
\end{align*}
By induction, for $1\leq j\leq r$,
\begin{align*}
(m-q+1)\vert  \eta_{(m-1,(q-1)^{k-1},q-1,\alpha_1-1,\dots, \alpha_{j}-1)} (0)\vert &\leq k\vert  \eta_{(m,(q-1)^{k-1},q-2,\alpha_1-1,\dots, \alpha_{j}-1)} (0)\vert.
\end{align*}
Hence, $(m-q+1)\left|\eta_\lambda (0) \right| \leq k\left|\eta_\mu (0) \right|$.
\end{proof}

The following lemma is obvious.

\begin{lem} \label{pre_box_ineq} If $u\geq v$, then
\begin{equation}
\left(\frac{u+1}{u}\right)\left(\frac{v-1}{v}\right)<1.\notag
\end{equation}
\end{lem}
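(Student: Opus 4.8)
The plan is to clear denominators and reduce the claim to a single linear comparison. In the intended range of application these quantities are positive integers, so in particular $u,v>0$; consequently both denominators $u$ and $v$ are positive, $v\neq 0$ so that $\tfrac{v-1}{v}$ is well defined, and multiplying through by $uv$ preserves the direction of the inequality. The stated inequality is therefore equivalent to the polynomial inequality $(u+1)(v-1) < uv$.

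First I would expand the left-hand side, obtaining $(u+1)(v-1) = uv - u + v - 1$, and then subtract it from $uv$ to get
\[
uv - (u+1)(v-1) = u - v + 1.
\]
Since $u \ge v$, the right-hand side satisfies $u - v + 1 \ge 1 > 0$, whence $(u+1)(v-1) < uv$. Dividing both sides by the positive quantity $uv$ recovers the asserted strict inequality $\left(\tfrac{u+1}{u}\right)\left(\tfrac{v-1}{v}\right) < 1$.

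There is essentially no obstacle here: the argument is a one-line algebraic manipulation, and the only point needing (minimal) care is the implicit positivity of $u$ and $v$, which is what licenses clearing the denominators and guarantees that the expression on the left is well defined. This is precisely why the statement can be recorded as obvious.
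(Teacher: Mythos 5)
Your proof is correct, and since the paper simply declares this lemma obvious without supplying an argument, your one-line computation (clearing denominators to reduce the claim to $uv-(u+1)(v-1)=u-v+1\geq 1>0$) is exactly the verification the authors had in mind; your remark about the implicit positivity of $u$ and $v$ is also apt, as in the application these are hook lengths and hence positive integers.
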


 \begin{lem} \label{pre_box1}
  Let $r\geq 0$, $k\geq 1$, $m\geq q\geq 2$, $n=m+kq+\sum_{j=1}^r \alpha_j$, $q>\alpha_1$ and
\begin{align*}
\lambda & = (m,q^{k-1},q,\alpha_1,\dots, \alpha_r),\\
\mu & = (m+1,q^{k-1},q-1,\alpha_1,\dots, \alpha_r),
\end{align*}
be partitions of $[n]$. Then
      \begin{align}
     \frac{f^\lambda}{f^\mu}<\frac{(m-q+1)}{k}. \notag
  \end{align}
\end{lem}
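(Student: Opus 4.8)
The plan is to compute the ratio $f^\lambda / f^\mu$ explicitly using the hook-length formula (Lemma \ref{dimension_f}), and then show the resulting expression is bounded above by $(m-q+1)/k$. Since $\lambda$ and $\mu$ have the same parts in all rows except for the first row (which goes from $m$ to $m+1$) and the $(k+1)$-th row (which goes from $q$ to $q-1$), the ratio $f^\lambda/f^\mu = H^\mu/H^\lambda$ (where $H$ denotes the product of hook-lengths) will simplify to a product of ratios of hook-lengths involving only the boxes whose hook-lengths change when passing from $\lambda$ to $\mu$. These are precisely the boxes lying in the first row, the first column, and the row that is modified.

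First I would set up coordinates for the Ferrers diagrams of both $\lambda$ and $\mu$ and identify exactly which boxes have different hook-lengths. The boxes in the first row of $\lambda$ occupy columns $1$ through $m$; in $\mu$ the first row is longer (columns $1$ through $m+1$). The row indexed $k+1$ shrinks from length $q$ to $q-1$. A box in row $a$, column $b$ has its hook-length changed only if it lies in a row or column whose boundary is affected, i.e.\ in the first row, or in column $q$ (the removed box sits at $(k+1,q)$ in $\lambda$), or in the newly created box at $(1,m+1)$ of $\mu$. I would carefully isolate these contributions so that the bulk of the hook-length products cancels.

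Next I would expect the quotient to telescope into a manageable rational expression. The key structural fact is that moving one cell from the end of row $k+1$ to the end of row $1$ changes hook-lengths in a controlled way; pairing each affected box in $\lambda$ with the corresponding box in $\mu$ should yield factors of the form $\frac{u+1}{u}$ and $\frac{v-1}{v}$, which is exactly the setting of Lemma \ref{pre_box_ineq}. I anticipate that after cancellation the ratio $f^\lambda/f^\mu$ becomes a product of a ``main'' factor equal to $\frac{m-q+1}{k}$ (or something directly comparable to it) times several factors each of the form $\left(\frac{u+1}{u}\right)\left(\frac{v-1}{v}\right)$ with $u\geq v$, and Lemma \ref{pre_box_ineq} forces each such factor to be strictly less than $1$. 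The presence of the strict inequality in the conclusion strongly suggests that at least one genuine factor $<1$ survives, which is guaranteed by the hypotheses $m\geq q\geq 2$.

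The hardest part will be the bookkeeping of the hook-lengths, in particular correctly tracking how the hook-lengths of boxes in the first column behave, since both the first row and the $(k+1)$-th row terminate in the first column and the removal/addition of cells shifts arm- and leg-lengths for a whole vertical strip of boxes. I would need to verify that each changed hook-length pairs up cleanly so that Lemma \ref{pre_box_ineq} applies with the correct orientation $u\geq v$; a sign error or an off-by-one in identifying which boxes gain versus lose an arm or leg would break the bound. Once the factorization is pinned down, the estimate itself is immediate from Lemma \ref{pre_box_ineq}, so the entire difficulty is concentrated in the explicit hook-length computation and the verification that the inequalities $u\geq v$ hold for every paired factor.
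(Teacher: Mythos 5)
Your plan is essentially the paper's proof: the authors compute $f^\lambda/f^\mu = H^\mu/H^\lambda$, note that the only hook-lengths that differ are those in rows $1$ and $k+1$ at columns $1,\dots,q-1$ (giving paired factors $\left(\frac{c_i+1}{c_i}\right)\left(\frac{d_i-1}{d_i}\right)$ with $c_i>d_i$, killed by Lemma \ref{pre_box_ineq}) together with the tail of row $1$ and the boxes of column $q$, which contribute exactly $\frac{(m+1-q)!\,(k-1)!}{(m-q)!\,k!}=\frac{m-q+1}{k}$, so the factorization you anticipate is precisely what emerges. One small correction to your bookkeeping: the first column is unaffected outside rows $1$ and $k+1$, and it is column $q$ (rows $2,\dots,k+1$) that supplies the factor $1/k$; with that adjustment your argument is the paper's.
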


\begin{proof} Note that $h_{\mu}(i,j)=h_{\lambda}(i,j)$ for all $i,j$ except when $i=q$, $j=1$ or $j=k+1$.
Let $c_{i}=h_{\lambda}(i,1)$  and  $d_{i}=h_{\lambda}(i,k+1)$ for $1\leq i\leq q-1$. Note that 
$h_{\mu}(i,1)=c_{i}+1$  and  $h_{\mu}(i,k+1)=d_i-1$ for $1\leq i\leq q-1$, and $h_{\mu}(q,1)=h_{\lambda}(q,1)$. Therefore
\begin{align*}
 \frac{f^\lambda}{f^\mu} &= \frac{H^\mu}{H^\lambda}\\
 &= \frac{\left(\prod_{i=1}^{q-1}(c_i+1)\right)\left(\prod_{i=1}^{q-1}(d_i-1)\right)(m+1-q)!(k-1)!}{\left(\prod_{i=1}^{q-1}c_i\right)\left(\prod_{i=1}^{q-1}d_i\right)(m-q)!k!}\\
 &=\left(  \prod_{i=1}^{q-1}\left(\frac{c_i+1}{c_i}\right)\left(\frac{d_i-1}{d_i}\right) \right) \frac{(m+1-q)}{k}\\
 &<\frac{(m-q+1)}{k},
\end{align*}
where the last inequality follows from $c_i>d_i$ and Lemma \ref{pre_box_ineq}.
\end{proof}

\begin{thm} \label{thm_ineq}
 Let $r\geq 0$, $k\geq 1$, $m\geq q\geq 1$, $n=m+kq+\sum_{j=1}^r \alpha_j$, $q>\alpha_1$ and
\begin{align*}
\lambda & = (m,q^{k-1},q,\alpha_1,\dots, \alpha_r),\\
\mu & = (m+1,q^{k-1},q-1,\alpha_1,\dots, \alpha_r),
\end{align*}
be partitions of $[n]$. Then
  \[ f^\lambda \left|\eta_\lambda (0) \right| \leq f^\mu \left|\eta_\mu (0) \right|. \]
  Furthermore, equality holds if and only if $\lambda=(1,1^{n-1})$ or $\lambda=(n-1,1)$.
\end{thm}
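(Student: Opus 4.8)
The plan is to prove the inequality by combining the two preparatory estimates already established, splitting the argument according to whether $q\ge 2$ or $q=1$. The case distinction is natural because Lemma~\ref{pre_box1} (the dimension ratio bound) is only stated for $q\ge 2$, while $q=1$ collapses the structure of $\lambda$ and $\mu$ into the hook-shaped partitions handled by Lemma~\ref{res1}.

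For $q\ge 2$ I would simply multiply Lemma~\ref{pre_box1} and Lemma~\ref{lemma_special2}. The former gives $f^\lambda < \frac{m-q+1}{k}\,f^\mu$ (using $f^\mu>0$), and the latter gives $(m-q+1)|\eta_\lambda(0)| \le k\,|\eta_\mu(0)|$, i.e. $\frac{m-q+1}{k}|\eta_\lambda(0)| \le |\eta_\mu(0)|$. Since $m\ge q\ge 2$ and $k\ge 1$ force $n=m+kq+\sum_j\alpha_j\ge 4\ge 2$, Theorem~\ref{ASP_0} guarantees $\eta_\lambda(0)\neq 0$, so $|\eta_\lambda(0)|>0$. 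Chaining the two bounds then yields
\[ f^\lambda|\eta_\lambda(0)| < \frac{m-q+1}{k}\,f^\mu|\eta_\lambda(0)| = f^\mu\cdot\frac{m-q+1}{k}\,|\eta_\lambda(0)| \le f^\mu|\eta_\mu(0)|, \]
a \emph{strict} inequality. Hence no equality can occur when $q\ge 2$.

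For $q=1$, the hypothesis $q>\alpha_1$ is incompatible with $\alpha_1\ge 1$, so necessarily $r=0$; then $\lambda=(m,1^{k-1},1)=(m,1^k)$ and $\mu=(m+1,1^{k-1})$ are partitions of $n=m+k$. Writing $p=k=n-m$, these are exactly the partitions $(n-p,1^p)$ and $(n-p+1,1^{p-1})$ of Lemma~\ref{res1} (whose constraint $1\le p\le n-1$ reduces here to $m\ge 1$, which holds). Lemma~\ref{res1} then gives $f^\lambda|\eta_\lambda(0)|\le f^\mu|\eta_\mu(0)|$ with equality iff $p=1$ or $n-p=1$, i.e. iff $k=1$ or $m=1$.

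Finally I would translate these equality conditions into the claimed form: when $q=1$ and $k=1$ we have $\lambda=(m,1)=(n-1,1)$, while when $q=1$ and $m=1$ we have $\lambda=(1,1^k)=(1,1^{n-1})$. Combined with the strictness of the $q\ge 2$ branch, this shows equality holds precisely for $\lambda=(n-1,1)$ or $\lambda=(1,1^{n-1})$. The argument is essentially a bookkeeping combination of the earlier lemmas, so the only real obstacle is the verification work I would be most careful about: confirming that $q=1$ genuinely forces $r=0$, that the resulting $\lambda,\mu$ match the indexing of Lemma~\ref{res1} exactly, and that the two equality cases from that lemma correspond precisely to the two stated partitions while the $q\ge2$ case contributes only strict inequality.
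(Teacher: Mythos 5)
Your proposal is correct and follows essentially the same route as the paper: split on $q=1$ versus $q\ge 2$, reduce the $q=1$ case (where $q>\alpha_1$ forces $r=0$) to Lemma~\ref{res1} with $p=k$, and for $q\ge 2$ chain Lemma~\ref{pre_box1} with Lemma~\ref{lemma_special2} to get a strict inequality. Your extra bookkeeping --- noting that $|\eta_\lambda(0)|>0$ via Theorem~\ref{ASP_0} to justify strictness, and translating the equality cases $k=1$ and $m=1$ into $(n-1,1)$ and $(1,1^{n-1})$ --- is exactly what the paper leaves implicit.
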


\begin{proof}  Suppose $q=1$. Then $r=0$ and the theorem follows from Lemma \ref{res1}.
Suppose $q\geq 2$. By Lemma \ref{lemma_special2} and  \ref{pre_box1},
\begin{equation}
 \frac{f^\lambda}{f^\mu} \left|\eta_\lambda (0) \right|<\frac{(m-q+1)}{k}\left|\eta_\lambda (0) \right|\leq \left|\eta_\mu (0) \right|. \notag
\end{equation}
This complete the proof of the theorem.
\end{proof}

\section{Proof of Theorem \ref{ASP_1}}

\begin{proof} Suppose the Ferrers diagrams obtained from $\lambda$ by removing $1$ node from the right hand side from any row of the diagram so that the resulting diagram will still be a partition of $(n-1)$ are those of $\mu_1, \ldots, \mu_s$. Then by Theorem \ref{eigen-n1}, 
\begin{align}
  \eta_{\lambda} (1) &= \frac{n}{k f^\lambda} \sum_{j=1}^s f^{\mu_j} \eta_{\mu_j} (0).\notag
  \end{align}

Suppose $r=1$. Then $s=1$ and $\mu_1=(\lambda_1-1)=(n-1)$. Thus, $\eta_{\lambda} (1)=\frac{n}{k f^\lambda}  f^{\mu_1} \eta_{\mu_1} (0)\geq 0$ and with equality if and only if $\mu_1=(1)$, i.e., $\lambda=(2)$.

 Suppose $r\geq 2$. If $\lambda_1=\lambda_2$, then the first part of each $\mu_j$ is $\lambda_1$.  By Theorem \ref{ASP_0}, $\sgn(\eta_{\mu_j}(0))=\left(\sum_{i=2}^r \lambda_i\right)-1=|\lambda|-\lambda_{1}-1$. 
Hence, $\eta_{\lambda} (1)=(-1)^{|\lambda|-\lambda_{1}-1}\frac{n}{k f^\lambda} \sum_{j=1}^s f^{\mu_j} \vert \eta_{\mu_j} (0)\vert$. 
Note that $\eta_{\lambda} (1)=0$ if and only if $s=1$ and $\mu_1=(1)$, i.e., $\lambda=(1,1)$. For other partitions $\lambda$,  $\vert\eta_{\lambda} (1)\vert\neq 0$ and  $\sgn(\eta_{\lambda}(1))=|\lambda|-\lambda_{1}-1$.

Suppose $\lambda_1=m+1>\lambda_2=q$. Note that we may write 
\begin{equation}
\lambda  = (m+1,q^{k-1},q,\alpha_1,\dots, \alpha_r),\notag
\end{equation}
where  $r\geq 0$, $k\geq 1$, $m\geq q\geq 1$, and $q>\alpha_1$. Let
\begin{align*}
\mu_1 & = (m,q^{k-1},q,\alpha_1,\dots, \alpha_r),\\
\mu_2 & = (m+1,q^{k-1},q-1,\alpha_1,\dots, \alpha_r).\notag
\end{align*}
By Theorem \ref{ASP_0}, $\sgn(\eta_{\mu_1}(0))=|\lambda|-\lambda_{1}$ and $\sgn(\eta_{\mu_j}(0))=|\lambda|-\lambda_{1}-1$ for $j\geq 2$.
 This implies that 
\begin{align*}
 \eta_{\lambda} (1) &= (-1)^{|\lambda|-\lambda_{1}-1}\frac{n}{k f^\lambda} \left(f^{\mu_2} \vert \eta_{\mu_2}(0))\vert- f^{\mu_1} \vert \eta_{\mu_1}(0))\vert +\sum_{j=3}^s f^{\mu_j} \vert \eta_{\mu_j} (0)\vert\right).
\end{align*}
By Theorem \ref{thm_ineq}, $f^{\mu_2}\vert \eta_{\mu_2} (0)\vert-f^{\mu_1} \vert\eta_{\mu_1} (0)\vert \geq 0$. Furthermore, equality holds if and only if 
\begin{equation}
\mu_1=(1,1^{n-2})\ \textnormal{ or}\ \mu_1=(n-2,1),\notag
\end{equation}
i.e., $\lambda= (2,1^{n-2})$ or $(n-1,1)$. Note also that when this happens, we have $s=2$ and  $\eta_{\lambda} (1)=0$.
For other partitions $\lambda$, $f^{\mu_2}\vert \eta_{\mu_2} (0)\vert-f^{\mu_1} \vert\eta_{\mu_1} (0)\vert > 0$. Hence, $\vert\eta_{\lambda} (1)\vert\neq 0$ and  $\sgn(\eta_{\lambda}(1))=|\lambda|-\lambda_{1}-1$.

This complete the proof of the theorem.
\end{proof}

\newpage

\section{Eigenvalues Table for  $\mathcal{F} (n,1)$}

  \begin{center}
  $n=3$\\
  \end{center}
  \begin{center}
  \begin{tabular}{|c|c||c|c||c|c|}
    \hline
    $\lambda$ & $\eta_\lambda$ & $\lambda$ & $\eta_\lambda$ & $\lambda$ & $\eta_\lambda$\\
    \hline
    $[3]$ & $3$ & $[2,1]$ & $0$ & $[1^3]$ & $-3$ \\
    \hline
  \end{tabular}
  \end{center}
  \begin{center}
  $n=4$\\
  \end{center}
  \begin{center}
  \begin{tabular}{|c|c||c|c||c|c||c|c||c|c|}
    \hline
    $\lambda$ & $\eta_\lambda$ & $\lambda$ & $\eta_\lambda$ & $\lambda$ & $\eta_\lambda$ & $\lambda$ & $\eta_\lambda$ & $\lambda$ & $\eta_\lambda$ \\
    \hline
    $[4]$ & $8$ & $[3,1]$ & $0$ & $[2,2]$ & $-4$ & $[2,1^2]$ & $0$ & $[1^4]$ & $8$ \\
    \hline
  \end{tabular}
  \end{center}
  \begin{center}
  $n=5$\\
  \end{center}
  \begin{center}
  \begin{tabular}{|c|c||c|c||c|c||c|c|}
    \hline
    $\lambda$ & $\eta_\lambda$ & $\lambda$ & $\eta_\lambda$ & $\lambda$ & $\eta_\lambda$ & $\lambda$ & $\eta_\lambda$ \\
    \hline
    $[5]$ & $45$  & $[3,2]$ & $-3$ & $[2^2,1]$ & $9$ &  $[1^5]$ & $-15$ \\
    $[4,1]$ & $0$ & $[3,1^2]$ & $-5$ & $[2,1^3]$ & $0$ & & \\
    \hline
  \end{tabular}
  \end{center}
  \begin{center}
  $n=6$\\
  \end{center}
  \begin{center}
  \begin{tabular}{|c|c||c|c||c|c||c|c|}
    \hline
    $\lambda$ & $\eta_\lambda$ & $\lambda$ & $\eta_\lambda$ & $\lambda$ & $\eta_\lambda$ & $\lambda$ & $\eta_\lambda$\\
    \hline
    $[6]$ & $264$ & $[4,1^2]$ & $-12$ &$[3,1^3]$ & $12$ & $[2,1^4]$ & $0$\\
    $[5,1]$ & $0$ & $[3^2]$ & $24$ & $[2^3]$ & $-24$ & $[1^6]$ & $24$\\
    $[4,2]$ & $-16$ & $[3,2,1]$ & $9$ & $[2^2,1^2]$ & $-16$ & &\\
    \hline
  \end{tabular}
  \end{center}
  \newpage
  \begin{center}
  $n=7$\\
  \end{center}
  \begin{center}
    \begin{tabular}{|c|c||c|c||c|c||c|c||c|c|}
    \hline
    $\lambda$ & $\eta_\lambda$ & $\lambda$ & $\eta_\lambda$ & $\lambda$ & $\eta_\lambda$ & $\lambda$ & $\eta_\lambda$ & $\lambda$ & $\eta_\lambda$ \\
    \hline
    $[7]$ & $1855$ & $[5,1^2]$ & $-63$ & $[4,1^3]$ & $28$ & $[3,2,1^2]$ & $-17$ & $[2^2,1^3]$ & $25$ \\
    $[6,1]$ & $0$ & $[4,3]$ & $40$ & $[3^2,1]$ & $-45$ & $[3,1^4]$ & $-21$ & $[2,1^5]$ & $0$ \\
    $[5,2]$ & $-65$ & $[4,2,1]$ & $37$ & $[3,2^2]$ & $-15$ & $[2^3,1]$ & $40$ & $[1^7]$ & $-35$    \\
    \hline
  \end{tabular}
  \end{center}
  \begin{center}
  $n=8$\\
  \end{center}
  \begin{center}
  \begin{tabular}{|c|c||c|c||c|c||c|c|}
    \hline
    $\lambda$ & $\eta_\lambda$ & $\lambda$ & $\eta_\lambda$ & $\lambda$ & $\eta_\lambda$ & $\lambda$ & $\eta_\lambda$\\
    \hline
    $[8]$ & $14832$ & $[5,1^3]$ & $144$ & $[3^2,2]$ & $80$ & $[2^3,1^2]$ & $-60$ \\
    $[7,1]$ & $0$ & $[4^2]$ & $-168$ & $[3^2,1^2]$ & $72$ & $[2^2,1^4]$ & $-36$ \\
    $[6,2]$ & $-372$ & $[4,3,1]$ & $-72$ & $[3,2^2,1]$ & $24$ & $[2,1^6]$ & $0$ \\
    $[6,1^2]$ & $-352$ & $[4,2^2]$ & $-72$ & $[3,2,1^3]$ & $27$ & $[1^8]$ & $48$ \\
    $[5,3]$ & $180$ & $[4,2,1^2]$ & $-64$ & $[3,1^5]$ & $32$ & & \\
    $[5,2,1]$ & $153$ & $[4,1^4]$ & $-48$ & $[2^4]$ & $-72$ & & \\
    \hline
  \end{tabular}
  \end{center}
  \begin{center}
  $n=9$\\
  \end{center}
  \begin{center}
  \begin{tabular}{|c|c||c|c||c|c||c|c|}
    \hline
    $\lambda$ & $\eta_\lambda$ & $\lambda$ & $\eta_\lambda$ & $\lambda$ & $\eta_\lambda$ & $\lambda$ & $\eta_\lambda$\\
    \hline
    $[9]$ & $133497$ & $[5,3,1]$ & $-315$ & $[4,2,1^3]$ &$97$ & $[3,1^6]$ & $-45$ \\
    $[8,1]$ & $0$ & $[5,2,2]$ & $-273$ & $[4,1^5]$ & $72$ & $[2^4,1]$ & $105$ \\
    $[7,2]$ & $-2471$ & $[5,2,1^2]$ & $-263$ & $[3^3]$ & $-171$ & $[2^3,1^3]$ & $84$ \\
    $[7,1^2]$ & $-2385$ & $[5,1^4]$ & $-243$ & $[3^2,2,1]$ & $-120$ & $[2^2,1^5]$ & $49$ \\
    $[6,3]$ & $924$ & $[4^2,1]$ & $267$ & $[3^3,1^3]$ & $-105$ & $[2,1^7]$ & $0$ \\
    $[6,2,1]$ & $849$ & $[4,3,2]$ & $120$ & $[3,2^3]$ & $-33$ & $[1^9]$ & $-63$\\
    $[6,1^3]$ & $792$ & $[4,3,1^2]$ & $112$ & $[3,2^2,1^2]$ & $-35$ & & \\
    $[5,4]$ & $-375$ & $[4,2^2,1]$ & $112$ & $[3,2,1^4]$ & $-39$ & & \\
    \hline
  \end{tabular}
  \end{center}
  \begin{center}
  $n=10$\\
  \end{center}
  \begin{center}
  \begin{tabular}{|c|c||c|c||c|c||c|c|}
    \hline
    $\lambda$ & $\eta_\lambda$ & $\lambda$ & $\eta_\lambda$ & $\lambda$ & $\eta_\lambda$ & $\lambda$ & $\eta_\lambda$\\
    \hline
    $[10]$ & $1334960$ & $[6,1^4]$ & $-1320$ & $[4,3,2,1]$ &$-175$ & $[3,2^2,1^3]$ & $48$ \\
    $[9,1]$ & $0$ & $[5^2]$ & $1280$ & $[4,3,1^3]$ & $-160$ & $[3,2,1^5]$ & $53$ \\
    $[8,2]$ & $-19072$ & $[5,4,1]$ & $585$ & $[4,2^3]$ & $-172$ & $[3,1^7]$ & $60$ \\
    $[8,1^2]$ & $-18540$ & $[5,3,2]$ & $504$ & $[4,2^2,1^2]$ & $-160$ & $[2^5]$ & $-160$ \\
    $[7,3]$ & $5936$ & $[5,3,1^2]$ & $480$ & $[4,2,1^4]$ & $-136$ & $[2^4,1^2]$ & $-144$ \\
    $[7,2,1]$ & $5561$ & $[5,2^2,1]$ & $416$ & $[4,1^6]$ & $-100$ & $[2^3,1^4]$ & $-112$\\
    $[7,1^3]$ & $5300$ & $[5,2,1^3]$ & $395$ & $[3^3,1]$ & $248$ & $[2^2,1^6]$ & $-64$\\
    $[6,4]$ & $-1872$ & $[5,1^5]$ & $360$ & $[3^2,2^2]$ & $180$ & $[2,1^8]$ & $0$ \\
    $[6,3,1]$ & $-1584$ & $[4^2,2]$ & $-420$ & $[3^2,2,1^2]$ & $168$ & $[1^{10}]$ & $80$ \\
    $[6,2^2]$ & $-1488$ & $[4^2,1^2]$ & $-388$ & $[3^2,1^4]$ & $144$ & & \\
    $[6,2,1^2]$ & $-1432$ & $[4,3^2]$ & $-184$ & $[3,2^3,1]$ & $45$ & & \\
    \hline
  \end{tabular}
  \end{center}
  \newpage
{\small  
  \begin{center}
  $n=11$, $\lambda_1 \geq 5$\\
  \end{center}
  \begin{center}
  \begin{tabular}{|c|c||c|c||c|c|}
    \hline
    $\lambda$ & $\eta_\lambda$ & $\lambda$ & $\eta_\lambda$ & $\lambda$ & $\eta_\lambda$ \\
    \hline
    $[11]$ & $14684571$ & $[7,2,1^2]$ & $-9269$ & $[5,4,2]$ &$-875$   \\
    $[10,1]$ & $0$ & $[7,1^4]$ & $-8745$ & $[5,4,1^2]$ & $-837$  \\
    $[9,2]$ & $-166869$ & $[6,5]$ & $3576$ & $[5,3^2]$ & $-801$  \\
    $[9,1^2]$ & $-163163$ & $[6,4,1]$ & $2851$  & $[5,3,2,1]$ & $-720$ \\
    $[8,3]$ & $44496$ & $[6,3,2]$ & $2464$ & $[5,3,1^3]$ & $-675$  \\
    $[8,2,1]$ & $42381$ & $[6,3,1^2]$ & $2376$  & $[5,2^3]$ & $-603$ \\
    $[8,1^3]$ & $40788$ & $[6,2^2,1]$ & $2232$ & $[5,2^2,1^2]$ & $-585$ \\
    $[7,4]$ & $-11109$ & $[6,2,1^3]$ & $2121$ & $[5,2,1^4]$ & $-549$ \\
    $[7,3,1]$ & $-10017$ & $[6,1^5]$ & $1936$ & $[5,1^6]$ & $-495$\\
    $[7,2^2]$ & $-9531$ & $[5^2,1]$ & $-1863$ & & \\
    \hline
  \end{tabular}
  \end{center}
  \begin{center}
  $n=12$, $\lambda_1 \geq 6$\\
  \end{center}
  \begin{center}
  \begin{tabular}{|c|c||c|c||c|c|}
    \hline
    $\lambda$ & $\eta_\lambda$ & $\lambda$ & $\eta_\lambda$ & $\lambda$ & $\eta_\lambda$ \\
    \hline
    $[12]$ & $176214840$ & $[8,2,1^2]$ & $-69928$ & $[6,5,1]$ &$-5112$   \\
    $[11,1]$ & $0$ & $[8,1^4]$ & $-66744$ & $[6,4,2]$ & $-4160$  \\
    $[10,2]$ & $-1631620$ & $[7,5]$ & $19880$ & $[6,4,1^2]$ & $-4008$  \\
    $[10,1^2]$ & $-1601952$ & $[7,4,1]$ & $16665$  & $[6,3^2]$ & $-3684$ \\
    $[9,3]$ & $381420$ & $[7,3,2]$ & $15264$ & $[6,3,2,1]$ & $-3465$  \\
    $[9,2,1]$ & $367113$ & $[7,3,1^2]$ & $14840$  & $[6,3,1^3]$ & $-3300$ \\
    $[9,1^3]$ & $355992$ & $[7,2^2,1]$ & $14120$ & $[6,2^3]$ & $-3192$ \\
    $[8,4]$ & $-80112$ & $[7,2,1^3]$ & $13595$ & $[6,2^2,1^2]$ & $-3100$ \\
    $[8,3,1]$ & $-74160$ & $[7,1^5]$ & $12720$ & $[6,2,1^4]$ & $-2916$\\
    $[8,2^2]$ & $-71520$ & $[6^2]$ & $-10860$ & $[6,1^6]$ & $-2640$\\
    \hline
  \end{tabular}
  \end{center}
  
  \begin{center}
  $n=13$, $\lambda_1 \geq 6$\\
  \end{center}
  \begin{center}
  \begin{tabular}{|c|c||c|c||c|c|}
    \hline
    $\lambda$ & $\eta_\lambda$ & $\lambda$ & $\eta_\lambda$ & $\lambda$ & $\eta_\lambda$ \\
    \hline
    $[13]$ & $2290792933$ & $[8,3,1^2]$ & $108768$ & $[6^2,1]$ &$14877$   \\
    $[12,1]$ & $0$ & $[8,2^2,1]$ & $104896$ & $[6,5,2]$ & $7152$  \\
    $[11,2]$ & $-17621483$ & $[8,2,1^3]$ & $101713$ & $[6,5,1^2]$ & $6904$  \\
    $[11,1^2]$ & $-17354493$ & $[8,1^5]$ & $96408$  & $[6,4,3]$ & $5997$ \\
    $[10,3]$ & $3671140$ & $[7,6]$ & $-36155$ & $[6,4,2,1]$ & $5616$  \\
    $[10,2,1]$ & $3559897$ & $[7,5,1]$ & $-27953$  & $[6,4,1^3]$ & $5343$ \\
    $[10,1^3]$ & $3470896$ & $[7,4,2]$ & $-23805$ & $[6,3^2,1]$ & $4972$ \\
    $[9,4]$ & $-667467$ & $[7,4,1^2]$ & $-23147$ & $[6,3,2^2]$ & $4752$ \\
    $[9,3,1]$ & $-629343$ & $[7,3^2]$ & $-22271$ & $[6,3,2,1^2]$ & $4620$\\
    $[9,2^2]$ & $-611853$ & $[7,3,2,1]$ & $-21200$ & $[6,3,1^4]$ & $4356$\\
    $[9,2,1^2]$ & $-600731$ & $[7,3,1^3]$ & $-20405$  & $[6,2^3,1]$ & $4257$ \\
    $[9,1^4]$ & $-578487$ & $[7,2^3]$ & $-19853$ & $[6,2^2,1^3]$ & $4092$ \\
    $[8,5]$ & $133408$ & $[7,2^2,1^2]$ & $-19415$ & $[6,2,1^5]$ & $3817$ \\
    $[8,4,1]$ & $118683$ & $[7,2,1^4]$ & $-18539$ & $[6,1^7]$ & $3432$\\
    $[8,3,2]$ & $111240$ & $[7,1^6]$ & $-17225$ & & \\
    \hline
  \end{tabular}
  \end{center}
  }
  \newpage
  \begin{center}
  $n=15$\\
  \end{center}
  {\small
  \begin{center}
  \begin{tabular}{|c|c||c|c||c|c|}
    \hline
    $\lambda$ & $\eta_\lambda$ & $\lambda$ & $\eta_\lambda$ & $\lambda$ & $\eta_\lambda$ \\
    \hline
    $[15]$ & $481066515735$ & $[8,2,1^5]$ & $177997$ & $[6,3,1^6]$ &$6864$   \\
    $[14,1]$ & $0$ & $[8,1^7]$ & $166860$ & $[6,2^4,1]$ & $7032$  \\
    $[13,2]$ & $-2672591753$ & $[7^2,1]$ & $-133761$ & $[6,2^3,1^3]$ & $6813$  \\
    $[13,1^2]$ & $-2643222615$ & $[7,6,2]$ & $-65079$  & $[6,2^2,1^5]$ & $6448$ \\
    $[12,3]$ & $458158584$ & $[7,6,1^2]$ & $-63305$ & $[6,2,1^7]$ & $5937$  \\
    $[12,2,1]$ & $448546869$ & $[7,5,3]$ & $-52211$  & $[6,1^9]$ & $5280$ \\
    $[12,1^3]$ & $440537100$ & $[7,5,2,1]$ & $-49700$ & $[5^3]$ & $-11235$ \\
    $[11,4]$ & $-66080553$ & $[7,5,1^3]$ & $-47825$ & $[5^2,4,1]$ & $-7880$ \\
    $[11,3,1]$ & $-63633141$ & $[7,4^2]$ & $-46581$ & $[5^2,3,2]$ & $-6803$\\
    $[11,2^2]$ & $-62476167$ & $[7,4,3,1]$ & $-43193$ & $[5^2,3,1^2]$ & $-6545$\\
    $[11,2,1^2]$ & $-61675193$ & $[7,4,2^2]$ & $-41655$  & $[5^2,2^2,1]$ & $-6063$ \\
    $[11,1^4]$ & $-60073245$ & $[7,4,2,1^2]$ & $-40733$ & $[5^2,2,1^3]$ & $-5760$ \\
    $[10,5]$ & $9789640$ & $[7,4,1^4]$ & $-38889$ & $[5^2,1^5]$ & $-5255$ \\
    $[10,4,1]$ & $9154035$ & $[7,3^2,2]$ & $-38981$ & $[5,4^2,2]$ &$-3285$   \\
    $[10,3,2]$ & $8810736$ & $[7,3^2,1^2]$ & $-38111$ & $[5,4^2,1^2]$ & $-3183$  \\
    $[10,3,1^2]$ & $8677240$ & $[7,3,2^2,1]$ & $-36729$ & $[5,4,3^2]$ & $-3111$  \\
    $[10,2^2,1]$ & $8484424$ & $[7,3,2,1^3]$ & $-35616$  & $[5,4,3,2,1]$ & $-2925$ \\
    $[10,2,1^3]$ & $8306425$ & $[7,3,1^5]$ & $-33761$ & $[5,4,3,1^3]$ & $-2793$  \\
    $[10,1^5]$ & $8009760$ & $[7,2^4]$ & $-33965$  & $[5,4,2^3]$ & $-2685$ \\
    $[9,6]$ & $-1668105$ & $[7,2^3,1^2]$ & $-33351$ & $[5,4,2^2,1^2]$ & $-2615$ \\
    $[9,5,1]$ & $-1468563$ & $[7,2^2,1^4]$ & $-32123$ & $[5,4,2,1^4]$ & $-2475$ \\
    $[9,4,2]$ & $-1359655$ & $[7,2,1^6]$ & $-30281$ & $[5,4,1^6]$ & $-2265$\\
    $[9,4,1^2]$ & $-1334937$ & $[7,1^8]$ & $-27825$ & $[5,3^3,1]$ & $-2649$\\
    $[9,3^2]$ & $-1311141$ & $[6^2,3]$ & $27903$  & $[5,3^2,2^2]$ & $-2495$ \\
    $[9,3,2,1]$ & $-1271400$ & $[6^2,2,1]$ & $26064$ & $[5,3^2,2,1^2]$ & $-2421$ \\
    $[9,3,1^3]$ & $-1239615$ & $[6^2,1^3]$ & $24765$ & $[5,3^2,1^4]$ & $-2273$ \\
    $[9,2^3]$ & $-1223703$ & $[6,5,4]$ & $14181$ & $[5,3,2^3,1]$ &$-2160$   \\
    $[9,2^2,1^2]$ & $-1205165$ & $[6,5,3,1]$ & $12832$ & $[5,3,2^2,1^3]$ & $-2079$  \\
    $[9,2,1^4]$ & $-1168089$ & $[6,5,2^2]$ & $12252$ & $[5,3,2,1^5]$ & $-1944$  \\
    $[9,1^6]$ & $-1112475$ & $[6,5,2,1^2]$ & $11920$  & $[5,3,1^7]$ & $-1755$ \\
    $[8,7]$ & $393072$ & $[6,5,1^4]$ & $11256$ & $[5,2^5]$ & $-1725$  \\
    $[8,6,1]$ & $297585$ & $[6,4^2,1]$ & $11352$  & $[5,2^4,1^2]$ & $-1691$ \\
    $[8,5,2]$ & $250140$ & $[6,4,3,2]$ & $10287$ & $[5,2^3,1^4]$ & $-1623$ \\
    $[8,5,1^2]$ & $244588$ & $[6,4,3,1^2]$ & $9997$ & $[5,2^2,1^6]$ & $-1521$ \\
    $[8,4,3]$ & $228897$ & $[6,4,2^2,1]$ & $9515$ & $[5,2,1^8]$ & $-1385$\\
    $[8,4,2,1]$ & $220308$ & $[6,4,2,1^3]$ & $9152$ & $[5,1^{10}]$ & $-1215$\\
    $[8,4,1^3]$ & $213627$ & $[6,4,1^5]$ & $8547$  & $[4^3,3]$ & $3420$ \\
    $[8,3^2,1]$ & $209752$ & $[6,3^3]$ & $8808$ & $[4^3,2,1]$ & $3075$ \\
    $[8,3,2^2]$ & $203940$ & $[6,3^2,2,1]$ & $8409$ & $[4^3,1^3]$ & $2856$ \\
    $[8,3,2,1^2]$ & $200232$ & $[6,3^2,1^3]$ & $8100$ & $[4^2,3^2,1]$ &$2305$   \\
    $[8,3,1^4]$ & $192816$ & $[6,3,2^3]$ & $7920$ & $[4^2,3,2^2]$ & $2143$  \\
    $[8,2^3,1]$ & $190725$ & $[6,3,2^2,1^2]$ & $7744$ & $[4^2,3,2,1^2]$ & $2061$  \\
    $[8,2^2,1^3]$ & $185952$ & $[6,3,2,1^4]$ & $7392$  & $[4^2,3,1^4]$ & $1897$ \\
    \hline
  \end{tabular}
  \end{center}
  }

  \begin{center}
  \begin{tabular}{|c|c||c|c||c|c|}
    \hline
    $\lambda$ & $\eta_\lambda$ & $\lambda$ & $\eta_\lambda$ & $\lambda$ & $\eta_\lambda$ \\
    \hline
    $[4^2,2^3,1]$ & $1788$ & $[4,2^3,1^5]$ & $697$  & $[3,2^6]$ & $-123$  \\
    $[4^2,2^2,1^3]$ & $1695$ & $[4,2^2,1^7]$ & $520$   & $[3,2^5,1^2]$ & $-125$  \\
    $[4^2,2,1^5]$ & $1540$ & $[4,2,1^9]$ & $421$ & $[3,2^4,1^4]$ & $-129$  \\
    $[4^2,1^7]$ & $1323$ &$[4,1^{11}]$ & $300$ &  $[3,2^3,1^6]$ & $-135$ \\
    $[4,3^3,2]$ & $760$ & $[3^5]$ & $-1545$ & $[3,2^2,1^8]$ & $-143$ \\
    $[4,3^3,1^2]$ & $744$ & $[3^4,2,1]$ & $-1368$ & $[3,2,1^9]$ & $-153$ \\
    $[4,3^2,2^2,1]$ & $736$ & $[3^4,1^3]$ & $-1263$ & $[3,1^{12}]$ & $-165$\\
    $[4,3^2,2,1^3]$ & $709$ &$[3^3,2^3]$ & $-1119$ & $[2^7,1]$ & $624$\\
    $[4,3^2,1^5]$ & $664$ & $[3^3,2^2,1^2]$ & $-1073$ & $[2^6,1^3]$ & $585$\\
    $[4,3,2^4]$ & $720$ &  $[3^3,2,1^4]$ & $-981$ & $[2^5,1^5]$ & $520$ \\
    $[4,3,2^3,1^2]$ & $700$ & $[3^3,1^6]$ & $-843$ &  $[2^4,1^7]$ & $429$\\
    $[4,3,2^3,1^4]$ & $660$ & $[3^2,2^4,1]$ & $-693$ &  $[2^3,1^9]$ & $312$ \\
    $[4,3,2,1^6]$ & $600$ & $[3^2,2^3,1^3]$ & $-660$ & $[2^2,1^{11}]$ & $169$  \\
    $[4,3,1^8]$ & $520$ & $[3^2,2^2,1^5]$ & $-605$  &  $[2,1^{13}]$ & $0$ \\
    $[4,2^5,1]$ & $685$ & $[3^2,2,1^7]$ & $-528$ & $[1^{15}]$ & $-195$ \\
    $[4,2^4,1^3]$ & $652$ & $[3^2,1^9]$ & $-429$  & & \\
    \hline
  \end{tabular}
  \end{center}

\newpage

\end{document}